\title{The Froude number for solitary water waves with vorticity}
\begin{document}
\maketitle
\begin{abstract} 
  We consider two-dimensional solitary water waves on a shear flow
  with an arbitrary distribution of vorticity. Assuming that the
  horizontal velocity in the fluid never exceeds the wave speed and
  that the free surface lies everywhere above its asymptotic level, we
  give a very simple proof that a suitably defined Froude number $F$
  must be strictly greater than the critical value $F=1$. We also
  prove a related upper bound on $F$, and hence on the amplitude,
  under more restrictive assumptions on the vorticity.
\end{abstract}

\nowtheabstractisdone






\section{Introduction}\label{sec:intro} 
\subsection{Statement of the main results}\label{sec:intro:results} 

We consider the motion of a two-dimensional fluid which is bounded
above by a free surface under constant (atmospheric) pressure and
below by a horizontal bed. Gravity acts as an external force, and
there is no surface tension on the free surface. Inside the fluid, the
velocity $(u,v)$ and pressure $P$ satisfy the incompressible Euler
equations. We denote the horizontal bed by $y=-d$ and the free surface
by $y=\eta(x,t)$. Fixing the constant wave speed $c > 0$, we assume
that the motion is steady in that $\eta$, $u$, $v$, and $P$ depend on
$x$ and $t$ only through the combination $x-ct$, which we henceforth
abbreviate to $x$. We also assume that the wave is solitary in that
\begin{align*}
  \eta \to 0,
  \quad 
  v \to 0,
  \quad 
  u \to U(y),
  \qquad \asa x \to \pm\infty,
\end{align*}
uniformly in $y$, where the horizontal velocity $U$ of the shear flow
at $x=\pm\infty$ is an arbitrary function of $-d \le y \le 0$. We call
a solitary wave \emph{trivial} if $\eta \equiv 0$, $v \equiv 0$, and
$u \equiv U$. In the context of this paper, we will also call a
solitary wave a \emph{wave of elevation} if $\eta(x) \ge 0$
for all $x$ but $\eta \not \equiv 0$. Similarly we call a solitary
wave a \emph{wave of depression} if $\eta(x) \le 0$ for all
$x$ but $\eta \not \equiv 0$.

The classical Froude number for solitary waves is the dimensionless
ratio $c/\sqrt{gd}$. When working with shear flows, however, we find
it more convenient to define the Froude number $F$ by
\begin{align}
  \label{eqn:froude}
  \frac 1{F^2} = 
  g \int_{-d}^0 \frac {\dd y}{(c-U(y))^2},
\end{align}
where we have assumed that the velocity $U$ of the shear flow is
strictly less than the wave speed~$c$. This definition reduces to the
classical one when $U$ vanishes identically, and it has the advantage
that the critical Froude number is $F=1$ regardless of the shear flow
$U$. In particular, with this convention the small-amplitude solitary
waves with vorticity constructed in \citep{ter:rot,hur:exact,gw} have
Froude numbers $F$ slightly bigger than $1$. 

The reader may assume that $u,v,P,\eta,U$ are all $C^2$ or even
smooth. On the other hand, our arguments go through unchanged for
solutions with the more limited regularity
\begin{align}
  \label{eqn:reg}
  u,v,P \in W^{1,r}_\loc(\Dbar) \subset C^\alpha_\loc(\Dbar),
  \quad 
  \eta \in C^{1+\alpha}(\R),
  \quad 
  U \in W^{1,r}(-d,0) \sub C^\alpha[-d,0],
\end{align}
where here $0 < \alpha < 1$, $r = 2/(1-\alpha)$, and $D_\eta = \{(x,y)
: -d < y < \eta(x)\}$ denotes the fluid domain. By $w \in
W^{1,r}_\loc(\Dbar)$ we mean that $w \in W^{1,r}(D')$ whenever
$\overline{D'} \sub \Dbar$ is compact, and similarly for
$C^\alpha_\loc(\Dbar)$. The regularity \eqref{eqn:reg} is an analogue
for solitary waves of the regularity assumed in Theorem 2 of
\nat{\citealt}{\cite}{cs:discont} for periodic waves.
\begin{theorem}\label{thm:lower}
  Consider a solitary wave with $\sup u < c$ and the regularity
  \eqref{eqn:reg}. Then $F \ne 1$. Moreover, $F > 1$ if it is a wave
  of elevation, and $F < 1$ if it is a wave of depression.
\end{theorem}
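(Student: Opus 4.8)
The plan is to work in semi-Lagrangian (flux) coordinates, which are available precisely because $\sup u < c$. Introducing the relative stream function $\psi$ with $\psi_y = c-u > 0$ and $\psi_x = v$, normalized so that $\psi = 0$ on the bed and $\psi = p_0 := \int_{-d}^0 (c-U)\,\dd y$ on the surface, I would take $(x,p)$ with $p = \psi$ as independent variables and the streamline height $y = h(x,p)$ as the unknown. Then $h_p = 1/(c-u) > 0$, $v = -h_x/h_p$, and Bernoulli's theorem on each streamline reads $P = E(p) - gh - (1+h_x^2)/(2h_p^2)$, where $E(p) = \tfrac12(c-U)^2$ is inherited from the shear flow at infinity. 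The free surface becomes $h(\cdot,p_0) = \eta$, while the asymptotic shear flow becomes the $x$-independent profile $Y = \phi_0$ with $Y' = 1/(c-U)$.

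The engine of the proof is the horizontal flow force, which is conserved by integrating the horizontal momentum equation across the fluid and using the conditions on the bed and free surface. In flux coordinates it reads
\begin{equation*}
  S = \int_0^{p_0}\Bigl[(E(p)-gh)h_p + \frac{1-h_x^2}{2h_p}\Bigr]\dd p
    = \mathcal J[h(x,\cdot)] - \int_0^{p_0}\frac{h_x^2}{2h_p}\,\dd p,
\end{equation*}
where $\mathcal J[\phi] = \int_0^{p_0}\bigl[(E-g\phi)\phi' + 1/(2\phi')\bigr]\dd p$ is the flow force of the parallel flow with profile $\phi$. The decisive computation is that $\mathcal J$ is \emph{strictly convex} on profiles with fixed endpoints (the term $-g\int\phi\phi'$ is a constant, $\int E\phi'$ is affine, and $\int 1/(2\phi')$ is strictly convex since $\phi'=h_p>0$), so the parallel flow $\phi_s$ of surface height $s$ minimizes $\mathcal J$ among such profiles; setting $\Sigma(s) = \mathcal J[\phi_s]$ one finds $\Sigma'(0)=0$ and, crucially, $\Sigma''(0) = g(F^2-1)$. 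Equivalently, the second variation $\int_0^{p_0}(c-U)^3 w'^2\,\dd p - g\,w(p_0)^2$ (with $w(0)=0$) has minimum $g(F^2-1)$ subject to $w(p_0)=1$, the Froude number entering through $\int_0^{p_0}(c-U)^{-3}\dd p = \int_{-d}^0 (c-U)^{-2}\dd y = 1/(gF^2)$. Thus $F=1$ is exactly the degenerate case, $F>1$ makes the uniform stream a strict local minimum of $\mathcal J$, and $F<1$ a saddle.

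Conservation of $S$ together with convexity gives, for every $x$, the one-sided relation $\mathcal J[h(x,\cdot)] = \Sigma(0) + \tfrac12\int_{-d}^\eta v^2\,\dd y \ge \Sigma(\eta(x))$. To convert this into the sign of $F-1$, I would finally use that $h$ solves the Dubreil--Jacotin equation $(1+h_x^2)h_{pp} - 2h_xh_p h_{xp} + h_p^2 h_{xx} + E'(p)h_p^3 = 0$, which is uniformly elliptic because $\sup u < c$ forces $h_p>0$. The difference $w = h - Y$ then satisfies a linear elliptic equation \emph{with no zeroth-order term} (the only coefficient not carrying a derivative of $h$ is $E'(p)$, and $E'(p)(h_p^3 - Y'^3)$ is first order in $w_p$), vanishes on the bed $p=0$ and as $x\to\pm\infty$, and equals $\eta$ on the surface $p=p_0$; the maximum principle then forces $w>0$ throughout for a wave of elevation (and $w<0$ for a wave of depression), so every streamline is raised, respectively lowered.

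The hard part is this last step: extracting the quantitative threshold $F>1$ from the qualitative sign $w>0$. Conservation of flow force and convexity alone cannot achieve it, since they only bound $\mathcal J[h]$ from below while leaving the genuinely two-dimensional term $\tfrac12\int v^2$ free, and this vertical-velocity contribution does not vanish even at the crest. I expect the resolution to be a Hopf-type or integral identity obtained by testing the equation for $w$ against the Jacobi field $w_\ast$ with $w_\ast' \propto (c-U)^{-3}$ that realizes $\Sigma''(0)$: the boundary term it produces at the free surface should be exactly the combination weighted by $F^2-1$, so that the strict sign of $w$ there pins down the sign of $F-1$ in agreement with $\Sigma''(0)=g(F^2-1)$. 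Making this coupling between the surface sign of $w$ and the integral defining $F$ rigorous, uniformly over the whole wave rather than only in the small-amplitude regime, is the crux.
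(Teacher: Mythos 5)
Your scaffolding is correct as far as it goes: the flow--force function $\Sigma$ does satisfy $\Sigma'(0)=0$ and $\Sigma''(0)=g(F^2-1)$, the characterization of $1/(gF^2)$ as $\int (c-U)^{-3}\,\dd p$ is right, and the Jacobi field you single out at the end, with derivative proportional to $(c-U)^{-3}=H_p^3$, is precisely the test function $\Phi(p)=\int_{-m}^p H_p^3(s)\,\dd s$ that the paper uses. But the proof is not complete, and you say so yourself: the step that converts all of this into the sign of $F-1$ is only ``expected.'' The gap is genuine, because the route you sketch for closing it cannot work as stated. Testing the \emph{linearized} equation for $w=h-H$ against a Jacobi field controls only the quadratic form whose minimum is $g(F^2-1)$; for a wave of arbitrary amplitude the discarded terms are cubic and higher in $w$ and carry no sign, which is exactly the ``small-amplitude only'' obstruction you acknowledge. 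Likewise the flow-force/convexity inequality $\mathcal J[h(x,\cdot)]\ge \Sigma(\eta(x))$ has the term $\tfrac12\int v^2\,\dd y$ on the wrong side, as you also note, and the maximum-principle fact $w>0$ (which in any case requires decay hypotheses beyond \eqref{eqn:reg}) feeds no quantitative information into the Froude number: the paper never uses it, and indeed elevation-implies-$w>0$ is recovered there only as a corollary \emph{after} $F>1$ is known.

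The single missing idea is to test the full \emph{nonlinear} divergence-form equation \eqref{eqn:height:1} --- not the equation for $w$ --- against $\Phi$, integrate over $\{|q|<M\}\times(-m,0)$, and use the surface condition \eqref{eqn:height:2} together with $g\Phi(0)=1/F^2$ to turn the surface boundary term into $\frac 1{F^2}\int_{-M}^M\eta\,\dd q$, the lateral flux terms vanishing as $M\to\infty$ by \eqref{eqn:height:asym}. What makes this work at arbitrary amplitude is the pointwise algebraic identity \eqref{eqn:magic},
\begin{align*}
  \Big( \frac{1+h_q^2}{2h_p^2} - \frac 1{2H_p^2} \Big) H_p^3
  =
  -(h_p-H_p) + \frac{H_p^3 h^2_q + (2h_p + H_p)(h_p-H_p)^2}{2h_p^2},
\end{align*}
whose remainder is a sum of squares, nonnegative for \emph{every} $h$ with $h_p>0$ (this is where $\sup u<c$ enters), while $-(h_p-H_p)$ integrates in $p$ to exactly $-\eta$. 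The outcome is Lemma~\ref{lem:lower}: $\big(1/F^2-1\big)\int_{-M}^M\eta\,\dd x$ plus a nonnegative, eventually strictly positive, double integral tends to $0$ as $M\to\infty$, and the theorem follows from the sign of $\int\eta$ alone --- no maximum principle, no sign of $w$, no flow force, no convexity. Your proposal contains the surrounding apparatus but not this identity or any substitute for it, so the crux you flag remains open.
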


In the irrotational case where the vorticity $\omega = v_x - u_y$
vanishes identically and $U$ is constant, the assumption $\sup u < c$
is automatically satisfied \citep{toland:survey} and the bound $F > 1$
for waves of elevation is well-known \citep{starr,at:finite,mcleod}.
While the assumption $\sup u < c$ is still reasonable for waves with
vorticity \citep{cs:exact}, it rules out the existence of critical
layers or stagnation points in flow. 

In some cases, the argument leading to Theorem~\ref{thm:lower} can be
extended to give an upper bound on the Froude number for waves of
elevation. Before giving this result, we define a dimensionless
quantity $\Lambda \ge 1$ by
\begin{align}
  \label{eqn:Lambda}
  \Lambda 
  = \max_y \frac{c-U(0)}{c-U(y)}.
\end{align}
We emphasize that $\Lambda$, like $F$, only depends on the shear flow
$U$ at infinity.
\begin{theorem}\label{thm:upper}
  For any solitary wave of elevation with the regularity
  \eqref{eqn:reg}, $\sup u < c$, and $\Lambda < 2/\sqrt 3$, the Froude
  number $F$ satisfies the upper bound
  \begin{align}
    \label{eqn:upper}
    F < \big( 1 - \tfrac 34 \Lambda^2 \big)^{-1/2}.
  \end{align}
\end{theorem}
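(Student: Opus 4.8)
The plan is to run the mechanism behind Theorem~\ref{thm:lower} at the crest and make it quantitative. Write $Y(x,s)$ for the height at station $x$ of the streamline sitting at height $-d\le s\le 0$ at infinity, so that $Y(x,0)=\eta(x)$, $Y(x,-d)=-d$, and $\kappa:=\partial_s Y=(c-U(s))/(c-u)>0$ by the hypothesis $\sup u<c$. Conservation of the flow force together with Bernoulli's theorem along streamlines gives, at every $x$, the identity
\[
  \int_{-d}^0 (c-U(s))^2\,\frac{\mu^2}{1+\mu}\,\dd s = g\,\eta(x)^2 + \int_{-d}^0 v^2\,(1+\mu)\,\dd s,\qquad \mu:=\kappa-1,
\]
subject to the constraint $\int_{-d}^0\mu\,\dd s=\eta(x)$ coming from $Y(x,0)-Y(x,-d)=\eta(x)+d$. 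This is the relation underlying Theorem~\ref{thm:lower}; the point now is to exploit it at the crest $x_0$, where $\eta(x_0)=\max\eta>0$ and, since $\eta'(x_0)=0$, the surface Bernoulli relation reduces to $(c-u)^2=(c-U(0))^2-2g\eta(x_0)$ there.

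Next I would insert this identity into the Cauchy--Schwarz inequality applied to $\eta(x_0)=\int_{-d}^0\mu\,\dd s$, splitting the integrand as $\mu=\big[(c-U)\,\mu/\sqrt{1+\mu}\,\big]\cdot\big[\sqrt{1+\mu}/(c-U)\big]$ and using the identity to evaluate the first factor. This produces a lower bound of the shape
\[
  \frac1{F^2}\ \ge\ 1 - g\int_{-d}^0\frac{\mu}{(c-U(s))^2}\,\dd s,
\]
once the vertical-velocity contribution at the crest section is accounted for. Theorem~\ref{thm:upper} is thereby reduced to the single estimate $g\int_{-d}^0 \mu\,(c-U)^{-2}\,\dd s<\tfrac34\Lambda^2$.

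To prove that estimate I would use $\Lambda$ to strip off the weight: because $\Lambda=\max_s (c-U(0))/(c-U(s))$ yields $c-U(s)\ge (c-U(0))/\Lambda$, the integrand is dominated by $\Lambda^2\,\mu\,(c-U(0))^{-2}$ wherever $\mu\ge 0$, so the integral is controlled by $\Lambda^2(c-U(0))^{-2}\int_{-d}^0\mu\,\dd s$ times a factor measuring the amplitude. The amplitude is then bounded using the crest Bernoulli relation above, which forces $g\,\eta(x_0)\le\tfrac12(c-U(0))^2$. Putting these together gives a bound of the form $(\text{const})\cdot\Lambda^2$, and the hypothesis $\Lambda<2/\sqrt3$ is exactly the threshold keeping that constant below $1$, i.e.\ keeping the right-hand side of the displayed lower bound positive (note $\tfrac34(2/\sqrt3)^2=1$).

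The main obstacle is controlling the streamline-stretching factor $\mu=\kappa-1$. The clean weight-removal step is only licit where $\mu\ge 0$, but for a general, possibly asymmetric wave of elevation $\mu$ need not be sign-definite across the crest section, and the term $\int_{-d}^0 v^2(1+\mu)\,\dd s$ also intervenes in the Cauchy--Schwarz step. Dominating the negative part of $\mu$ and this vertical-velocity integral by the left-hand side of the identity itself is the delicate point, and it is precisely what degrades the naive constant $\tfrac12$ to $\tfrac34$ and confines the argument to $\Lambda<2/\sqrt3$; this is the step I expect to demand the most care, and the restriction $\Lambda<2/\sqrt3$ is essentially dictated by it.
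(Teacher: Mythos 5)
Your key identity is in fact correct --- it is exactly the flow-force invariance $S(x)=S_\infty$ of Lemma~\ref{lem:force} rewritten in streamline coordinates, using $(c-U)^2=\lambda+2\Gamma$ and $\int_{-d}^0 Y\,\partial_s Y\,\dd s=\tfrac12(\eta^2-d^2)$ --- and your crest-localized strategy is genuinely different from the paper's, which never looks at a single section but integrates in $q$ over the whole wave and plays three identities (Lemmas~\ref{lem:lower}, \ref{lem:classic}, \ref{lem:extra}) against one another. However, there is a genuine gap, and it sits exactly at the two steps you defer: they are not delicate refinements but the whole difficulty, and neither is available under the hypotheses of Theorem~\ref{thm:upper}. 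First, to pass from Cauchy--Schwarz to your displayed inequality $1/F^2\ge 1-g\int\mu(c-U)^{-2}\,\dd s$ you must discard $V:=\int_{-d}^0 v^2(1+\mu)\,\dd s$, which enters on the unfavorable side: Cauchy--Schwarz plus your identity gives $\eta^2\le (g\eta^2+V)\big(\tfrac1{gF^2}+\int\mu(c-U)^{-2}\,\dd s\big)$, so $V$ must be \emph{bounded}, not merely ``accounted for.'' It vanishes on the crest section only for symmetric waves; symmetry is not a hypothesis of Theorem~\ref{thm:upper} (it is a conclusion, Corollary~\ref{cor:hur}, requiring the stronger assumptions \eqref{eqn:regmore}--\eqref{eqn:asymmore}), and dominating $V$ ``by the left-hand side of the identity itself'' is circular, since that left-hand side \emph{equals} $g\eta^2+V$. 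Second, the weight-removal step needs $\mu\ge 0$ on the whole crest section, i.e.\ $h_p\ge H_p$ there; with vorticity nothing in \eqref{eqn:reg} or $\sup u<c$ provides this, and it is not among the known qualitative properties even of symmetric waves (Corollary~\ref{cor:hur} gives monotonicity in $q$, namely $h_q<0$ for $q>0$, not a sign for $h_p-H_p$).

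Moreover, your claim that repairing these steps costs precisely the factor $\tfrac34$ in place of the naive $\tfrac12$ is unsupported by any mechanism. The natural attempt --- estimating the negative part of $\mu$ by Cauchy--Schwarz against $\int(c-U)^2\mu^2/(1+\mu)\,\dd s$ --- produces a loss of order $\sqrt{g\eta^2+V}\,/(\sqrt g\,F)$, which is of the same order as $\int\mu\,\dd s=\eta$ itself, not a small correction, and it again involves the uncontrolled $V$. In the paper the constant $\tfrac34$ has a completely different origin: it is $\tfrac32\cdot\tfrac12$, the product of the coefficient of $\int\eta^2\,\dd x$ in the Starr-type identity \eqref{eqn:classic} and Bernoulli's bound $g\max\eta<\lambda/2$; both sign problems you face are avoided structurally there, because integrating in $q$ over the whole wave makes asymmetry irrelevant, and because $h_q$ and $h_p-H_p$ enter \eqref{eqn:lower} and \eqref{eqn:extra} only through the squares $h_q^2$ and $(h_p-H_p)^2$. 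As written, your argument would prove the theorem only for symmetric waves satisfying $h_p\ge H_p$ under the crest, which is strictly less than what is claimed.
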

For irrotational waves, $U$ is constant, so clearly $\Lambda = 1 <
2/\sqrt 3$ and hence Theorem~\ref{thm:upper} gives the well-known
bound $F < 2$ \citep{starr,at:finite,mcleod}. More generally, if the
vorticity $\omega \le 0$, then $U(y) \le U(0)$ for $-d \le y \le 0$ so
that again $\Lambda = 1$, and Theorem~\ref{thm:upper} gives the same
bound $F < 2$.
In terms of the antiderivative $\Gamma(p)$ of the vorticity
function and Bernoulli constant $\lambda$ defined in
Section~\ref{sec:prelim}, the condition $\Lambda < 2/\sqrt 3$ can be
rephrased as $\min_p\Gamma(p) > -\lambda/8$.

\subsection{Historical discussion} \label{sec:intro:hist} 


\paragraph{Irrotational waves.} 
For irrotational waves, the asymptotic shear flow $U$ is constant, and
can be taken to be zero by switching to an appropriate reference
frame. Our formula \eqref{eqn:froude} for $F$ then reduces to the
classical ratio
\begin{align*}
  F = \frac c{\sqrt{gd}},
\end{align*}
which is named in honor of William Froude, who in the 1870s argued
that in order to compare the resistances felt by scaled models of a
ship, the ratio of the speed of the ship to the square root of its
length must be kept constant \citep{froude:greyhound}. 

The importance of the critical speed $c=\sqrt{gd}$ corresponding to $F
= 1$ was known long before Froude. In 1781, Lagrange showed that long
irrotational waves in shallow water travel with nearly this speed
\citep{darrigol:horse}, and in 1828 Bélanger showed that a hydraulic
jump can occur only if the upstream flow has $F>1$
\citep{belanger:essai,chanson:belanger}. More pertinent to this article
is John Scott Russell's famous 1844 report \citep{russell}, which gives
the empirical formula
\begin{align}
  \label{eqn:russell}
  F^2 \approx 1 + \frac{\max \eta}d
\end{align}
for the speed of small-amplitude irrotational solitary waves.
Theoretical justifications of \eqref{eqn:russell} came decades later
with the work of Boussinesq in 1871 and Rayleigh in 1876
\citep{darrigol:horse}.

In 1947, Starr gave a formal proof\nat{}{ \citep{starr}} of the
strikingly simple exact formula
\begin{align}
  \label{eqn:toogood}
  F^2 = 1 + \frac 3{2d} \frac{\int \eta^2 \, \dd x}{\int \eta\, \dd x}
\end{align}
for irrotational solitary waves\nat{ \citep{starr}}{};
see \citew{lh:solitary} for an alternate derivation. Given this
identity, the upper and lower bounds $1<F<2$ for waves of elevation
are straightforward. Indeed, since $\eta \ge 0$ does not vanish
identically, \eqref{eqn:toogood} immediately implies $F > 1$. On the
other hand, \eqref{eqn:toogood} also implies the upper bound
\begin{align}
  \label{eqn:alphatop}
  F^2 < 1 + \frac 32 \frac{\max \eta}d.
\end{align}
Since $\max \eta \le F^2d/2$ by Bernoulli's law, \eqref{eqn:alphatop}
in turn implies $F^2 < 1 + 3F^2/4$ and hence $F < 2$. Substituting $F
< 2$ back into Bernoulli's law we also obtain the bound $\max \eta <
2d$ on the amplitude.

In fact, Starr showed the improved upper bound 
\begin{align}
  \label{eqn:alphabetter}
  F^2 < 1 + \frac{\max \eta}d.
\end{align}
in which the coefficient $3/2$ in \eqref{eqn:alphatop} has been
reduced to the $1$ appearing in the asymptotic formula
\eqref{eqn:russell}. See \citew{kp} for an alternate derivation.
Arguing as in the previous paragraph, \eqref{eqn:alphabetter} leads to
the bounds $F < \sqrt 2$ and $\max \eta < d$ on the Froude number and
amplitude. We note that the proofs of \eqref{eqn:alphabetter} in
\nat{\citet{starr} and \citet{kp}}{\cite{starr,kp}} do
not depend on the identity \eqref{eqn:toogood}.

\nat
{ \citet{at:finite} gave rigorous proofs of the bounds $1<F<2$ in
their construction of large-amplitude irrotational solitary waves,
which involves reformulating the water wave problem as a Nekrasov-type
integral equation on the free surface. They objected to the assumption
that the ``mass'' $\int\eta\, \dd x$ was finite in the earlier
proofs\nat{}{ \cite{starr,lh:solitary,kp}}, and instead, as
\citet{mcleod} puts it, ``take sixteen pages and much complicated
estimating of integrals to prove $F>1$ without the assumption of
finite mass''. In response, \cite{mcleod} showed that the earlier
proofs could be easily modified to avoid the assumption of finite
mass. This modified proof also shows that the mass is necessarily
finite. } 
{ Amick and Toland gave
rigorous proofs of the bounds $1<F<2$ in their construction of
large-amplitude irrotational solitary waves \cite{at:finite}, in which
the water wave problem is reformulated as a Nekrasov-type integral
equation on the free surface. They objected to the assumption that the
``mass'' $\int\eta\, \dd x$ was finite in the earlier proofs
\cite{starr,lh:solitary,kp}, and instead, as McLeod puts it
\cite{mcleod}, ``take sixteen pages and much complicated estimating of
integrals to prove $F>1$ without the assumption of finite mass''. In
response to \cite{at:finite}, McLeod \cite{mcleod} showed that the
earlier proofs could be easily modified to avoid the assumption of
finite mass. This modified proof also shows that the mass is
necessarily finite. }

For later reference we also mention a third upper bound 
\begin{align}
  \label{eqn:sosad}
  F^2 < \frac{2(1+\max \eta/d)^2}{2+\max \eta/d},
\end{align}
which was obtained by \nat{\citet{kp}}{Keady and Pritchard \cite{kp}} using maximum
principle arguments. It is easy to check that \eqref{eqn:sosad} is
strictly weaker than \eqref{eqn:toogood} and \eqref{eqn:alphabetter}.
In particular, it cannot be combined with Bernoulli's law to obtain a
bound on the Froude number which is independent of the amplitude $\max
\eta$.

Numerics suggest that there is a one-parameter family of irrotational
solitary waves connecting small-amplitude waves with $F$ slightly
bigger than $1$ and the so-called wave of greatest height, which has a
stagnation point at its crest where there is a corner with a \ang{120}
interior angle. The maximum value of the Froude number as well as the
maxima of mass, momentum, and energy for this family are all achieved
before the wave of greatest height is reached
\citep{miles:survey,lhf:mme}. The wave with maximum Froude number has
$F =
1.294$ and $\max\eta/d =
0.790$. There seems to be some disagreement about the precise value of
the maximum amplitude, but nevertheless a consensus that it is
approximately $\max\eta/d = 0.83$ and hence that the corresponding
Froude number is $F = 1.29$ \citep{miles:survey,lhf:mme,hvb:steep}. We
note that the existence of a wave of extreme form was proved
rigorously in \nat{\citet{at:finite} and
\citet*{aft}}{\cite{at:finite,aft}}, while the existence of bifurcation
or turning points in the connected set of solutions containing
small-amplitude solitary waves was proved in \citew{plotnikov:turning}.

\paragraph{Waves with vorticity.} 
With vorticity, the importance of the critical value $F=1$ for long
waves in shallow water was recognized by \nat{\citet{burns}}{Burns in
1953 \cite{burns}}; $F=1$ is sometimes called the ``Burns condition''.
The small-amplitude solitary waves constructed by \nat{\citet{ter:rot}
and later by \citet{hur:exact} and then \citet{gw}}{ Ter-Krikorov
\cite{ter:rot} in 1961 and later by Hur \cite{hur:exact} and then
Groves and Wahlén \cite{gw}} all have $F$ slightly bigger than $1$. 

Besides the existence results mentioned above, there are to our
knowledge no lower bounds in the literature on the Froude number for
solitary waves with vorticity. Moreover, the only upper bounds
\nat{(\citealt{kk:bounds}; \citealt{paper}; also see
\citealt{kn:comp})}{\cite{kk:bounds,paper} (also see \cite{kn:comp})}
are proved by maximum principle arguments and reduce to
\eqref{eqn:sosad} for irrotational waves. In particular, these bounds
seem not to lead to bounds on the Froude number which are independent
of the amplitude $\max \eta$.

There are, however, many results on solitary waves with vorticity
which require the assumption that $F > 1$. In particular, Hur proved
exponential asymptotics \citep{hur:symmetry} and analyticity of
streamlines \citep{hur:analyticity} for waves with $F > 1$, and
symmetry for waves with $F > 1$ which are also waves of elevation
\citep{hur:symmetry} (see \cite{matioc2:regsym} for related results
without the assumption $F > 1$).
Upper and lower bounds (or the lack thereof) on the Froude number also
feature prominently in the construction by the author of
large-amplitude solitary waves in \cite{paper}, where it was also
shown that waves with $F > 1$ are necessarily strict waves of
elevation in that $\eta(x) > 0$ for all~$x$. See Section~\ref{sec:imp}
for more on the consequences of our main results for the amplitude, elevation,
symmetry, monotonicity, and decay of solitary waves, and
Section~\ref{sec:large} for more on the existence of large-amplitude
waves.

With large positive constant vorticity, numerics seem to suggest the
existence of overhanging waves with arbitrarily large Froude number
\citep{vandenb:constant}, a phenomena which cannot occur for
irrotational waves. See Section~\ref{sec:const} for versions of
Theorems~\ref{thm:lower} and \ref{thm:upper} in the special case of
constant vorticity. Note that overhanging waves must have $u=c$
somewhere along their free surfaces, and hence do not satisfy the
hypothesis $\sup u < c$ of Theorems~\ref{thm:lower} and
\ref{thm:upper}. While the existence proofs in
\nat{\citet{ter:rot}, \citet{hur:exact}, and
\citet{gw}}{\cite{ter:rot,hur:exact,gw}} are restricted to waves with
$\sup u < c$, the qualitative results in \cite{kk:bounds} are not.

\subsection{Method of proof and further consequences} \label{sec:intro:further} 

\paragraph{Integral identities with vorticity.} 
%
The main ingredient in the proof of Theorem~\ref{thm:lower} is an
integral identity, Lemma~\ref{lem:lower}, which seems to be completely
new. In particular, we emphasize that this lemma is \emph{not} a
straightforward generalization of the identity \eqref{eqn:toogood}
used in the irrotational case. Indeed, following the proof of
\eqref{eqn:toogood} but retaining the extra terms coming from
vorticity, one obtains a different identity, Lemma~\ref{lem:classic}
(which we use in the proof of Theorem~\ref{thm:upper}). Under the
relatively strong assumption that vorticity is nonnegative,
Lemma~\ref{lem:classic} implies a lower bound on the relative speed
$c-U(0)$ of asymptotic shear flow at the free surface. Except in the
irrotational case where $U(y)$ is constant, however, this bound is not
sufficient to show that the Froude number $F$ is greater than the
critical value~$1$. Indeed, the definition \eqref{eqn:froude} of $F$
involves the values $U(y)$ for all $-d \le y \le 0$, not just $y=0$, and
the lower bound $F > 1$ is sharp for small-amplitude waves.

\paragraph{Existence of large-amplitude waves.} 
In \cite{paper}, the author constructed a connected set $\cm$ of
solitary waves of elevation whose asymptotic shear flows are given by
\begin{align}
  \label{eqn:sadfamily}
  U(y) = U(y;F) = c-FU\s(y)
\end{align}
for some arbitrary but fixed positive function $U\s$ satisfying a
normalization condition. It is easy to see from \eqref{eqn:sadfamily}
that the dimensionless parameter $\Lambda$ is constant along $\cm$;
when $\Lambda < 2/\sqrt 3$, we can combine Theorems~\ref{thm:lower}
and \ref{thm:upper} with the results in \cite{paper} to show that
there exists a sequence of waves in $\cm$ which approach stagnation in
that $\sup u_n \to c$. This significant improvement is presented in
detail in Section~\ref{sec:large}
(Corollary~\ref{cor:solitary:extra}), along with a related result for
$\Lambda \ge 2/\sqrt 3$ (Corollary~\ref{cor:solitary}).

\paragraph{Waves with surface tension.} 
For irrotational waves with surface tension,
\nat{\citet{ak:tension}}{Amick and\linebreak Kirchgässner \cite{ak:tension}}
prove the analogue
\begin{align}
  \label{eqn:toogood:ten1}
  F^2 = 1 + \frac 3{2d} \frac{\int \eta^2 \, \dd x}{\int \eta\, \dd x}
  + \frac\sigma{gd} 
  \frac{\int (\sqrt{1+\eta_x^2}-1)\, \dd x}{\int \eta\, \dd x}
\end{align}
of \eqref{eqn:toogood}. When the constant coefficient $\sigma$ of
surface tension is nonnegative, \eqref{eqn:toogood:ten1} implies that
solitary waves of elevation have $F > 1$ while solitary waves of
depression have $F < 1$. In Section~\ref{sec:tension}, we will show
that the same is true for waves with vorticity; indeed, with slightly
stronger regularity assumptions, Lemma~\ref{lem:lower} and
Theorem~\ref{thm:lower} continue to hold in the presence of surface
tension and with nearly identical proofs.

\subsection{Outline} \label{sec:intro:outline} 
In Section~\ref{sec:prelim}, we perform a standard change of variables
and collect some formulas which will be used later. The main benefit
of this change of variables is that it transforms the fluid domain
into a fixed infinite strip, enabling us to multiply the Euler
equations by functions defined in terms of the asymptotic shear flow
$U(y)$ appearing in the definition \eqref{eqn:froude} of the Froude
number $F$.

In Section~\ref{sec:lower}, we give a short and elementary proof of
Theorem~\ref{thm:lower}, based on an integral identity,
Lemma~\ref{lem:lower}, in the transformed variables. The argument is
perhaps even simpler than the argument in \cite{mcleod}, provided the
change of variables in Section~\ref{sec:prelim} is taken for granted.

In Section~\ref{sec:upper}, we prove the upper bound
Theorem~\ref{thm:upper} using two additional integral
identities. The first, Lemma~\ref{lem:classic}, is essentially
\eqref{eqn:toogood} with an extra term coming from the vorticity, and
is proved mostly in the original physical variables. The second,
Lemma~\ref{lem:extra} is proved similarly to Lemma~\ref{lem:lower},
but with a different test function. 

In Section~\ref{sec:imp}, we collect some implications of
Theorems~\ref{thm:lower} and \ref{thm:upper} for the amplitude, mass,
symmetry, monotonicity, and exponential decay of solitary waves of
elevation. 

In Section~\ref{sec:large}, we show how Theorems~\ref{thm:lower} and
\ref{thm:upper} can be used to improve the existence theory for
large-amplitude solitary waves with vorticity developed by the author
in \cite{paper}.

In Section~\ref{sec:const}, we specialize Theorems~\ref{thm:lower} and
\ref{thm:upper} to the case where the vorticity is constant and more
explicit formulas can be given.

Finally, in Section~\ref{sec:tension} we prove that, with slightly
modified regularity assumptions, Lemma~\ref{lem:lower} and
Theorem~\ref{thm:lower} still hold for waves with surface tension.

\section{Preliminaries}\label{sec:prelim} 

With the conventions from Section~\ref{sec:intro:results}, we assume
that $u,v,P \in W^{1,r}_\loc(\Dbar)$ satisfy the stationary
incompressible Euler equations 
\begin{subequations}\label{eqn:ww}
  \begin{align}
    \label{eqn:ww:x}
    (u-c)u_x + vu_y &= -P_x,\\
    \label{eqn:ww:y}
    (u-c)v_x + vv_y &= -P_y -g,\\
    \label{eqn:ww:div}
    u_x + v_y &= 0,
  \end{align}
  in $L^r_\loc(\Dbar)$, together with the
  boundary conditions 
  \begin{alignat}{2}
    \label{eqn:ww:bot}
    v&=0 &\quad& \ona y=-d,\\
    \label{eqn:ww:top}
    v&=(u-c)\eta_x && \ona y =\eta(x),\\
    \label{eqn:ww:dynamic}
    P &= 0 &&\ona y = \eta(x),
  \end{alignat}
  pointwise, and the asymptotic conditions
  \begin{align}
    \label{eqn:ww:asym}
    \eta \to 0,
    \quad 
    v \to 0,
    \quad 
    u \to U(y)
    \qquad \asa x \to \pm\infty,
  \end{align}
  uniformly in $y$. The free surface elevation $\eta$ has the
  regularity $\eta \in C^{1+\alpha}(\R)$.
\end{subequations}
Here for convenience we have normalized $P$ to vanish on the free
surface; $P$ therefore represents the difference between the pressure in
the fluid and the atmospheric pressure. 

Because of incompressibility \eqref{eqn:ww:div}, there exists a stream
function $\psi \in W^{2,r}_\loc(\Dbar) \sub C^{1+\alpha}_\loc(\Dbar)$
which satisfies $\psi_y = u-c$ and $\psi_x = v$. From now on we will
always assume $\sup u < c$, or equivalently $\sup \psi_y < 0$. 
By the kinematic boundary conditions
\eqref{eqn:ww:bot}--\eqref{eqn:ww:top}, $\psi$ is constant on $y=-d$
and $y=\eta(x)$. Thus the flux
\begin{align}
  \label{eqn:flux}
  m:= \psi(x,-d)-\psi(x,\eta(x)) = \int_{-d}^{\eta(x)} (c-u(x,y))\,\dd y
  = \int_{-d}^0 (c-U(y))\, \dd y
\end{align}
is independent of $x$. We normalize $\psi$ so that $\psi = 0$ on
$y=\eta(x)$ and $\psi = -m$ on $y=-d$. The vorticity $\omega$ is given
in terms of $\psi$ by
\begin{align*}
  \omega = v_x - u_y = -\Delta \psi = \gamma(\psi) 
\end{align*}
for some function $\gamma \in L^r[-m,0]$ called the \emph{vorticity
function} \citep{cs:discont}. 

Using 
\begin{align*}
  q = x,
  \qquad p = -\psi
\end{align*}
as independent variables, we can rewrite \eqref{eqn:ww} in terms of the
so-called height function $h(q,p)$ defined by
\begin{align}
  \label{eqn:socalled}
  h(x,-\psi(x,y)) = y+d.
\end{align}
The advantage of this formulation is that $h$ is defined on the fixed
domain
\begin{align*}
  \Omega := \{(q,p) : -m < p < 0 \}.
\end{align*}
Defining the asymptotic height function $H(p)$ in a similar way,
\begin{align}
  \label{eqn:H}
  H(-\Psi(y))=y+d,
  \qquad 
  \wherea \Psi(y) = \int_0^y (U(s)-c)\, \dd s, 
\end{align}
we have $H(0) = d$ and $H(-m) = 0$. Since \eqref{eqn:socalled} implies
$h_p\inv = -\psi_y$, we necessarily have $\inf h_p > 0$ and $\min H_p
> 0$.

The arguments in \cite{cs:discont} show that, under the crucial
assumption $\sup u < c$,  the solitary water wave problem
\eqref{eqn:ww} is equivalent to the system
\begin{subequations}\label{eqn:height}
  \begin{alignat}{2}
    \label{eqn:height:1}
    \bigg( - \frac{1+h_q^2}{2h_p^2} + \frac 1{2H_p^2} \bigg)_p
    + \bigg( \frac{h_q}{h_p} \bigg)_q
    &= 0 &\qquad& -m < p < 0,\\
    \label{eqn:height:2}
    \frac{1+h_q^2}{2h_p^2} - \frac 1{2H_p^2} + g(h-H) &= 0
    && \ona p=0,\\
    \label{eqn:height:3}
    h &= 0 && \ona p=-m,
  \end{alignat}
  for $h \in W^{2,r}_\loc(\Obar) \sub C^{1+\alpha}_\loc(\Obar)$ and
  $H \in W^{2,r}[-m,0]$ together with the asymptotic conditions
  \begin{align}
    \label{eqn:height:asym}
    h_p \to H_p,\  h_q \to 0 \asa q \to \pm\infty
    \qquad \text{uniformly in $p$}.
  \end{align}
\end{subequations}
The velocity field $(u,v)$ and free surface $\eta$ can be recovered
from $h$ via
\begin{align}
  \label{eqn:conversion}
  c-u = \frac 1{h_p}, 
  \qquad 
  v = - \frac{h_q}{h_p},
  \qquad 
  \eta(q) = h(q,0)-H(0). 
\end{align}
We note that the divergence-form equation \eqref{eqn:height:1}
expresses the balance of the $y$-component of momentum and that
\eqref{eqn:height:2} is Bernoulli's law evaluated restricted to the
free surface.

Defining the antiderivative $\Gamma \in W^{1,r}[-m,0] \sub
C^\alpha[-m,0]$ of the vorticity function $\gamma$ and the Bernoulli
constant $\lambda$ by
\begin{align}
  \label{eqn:lambda}
  \Gamma(p) = \zint p \gamma(-s)\, \dd s,
  \qquad 
  \lambda = (U(0)-c)^2 = \frac 1{H_p^2(0)},
\end{align}
we have the following useful relation between $\gamma$, $H$, and $U$
\begin{align}
  \label{eqn:useful}
  (U-c)^2(H(p)) = \frac 1{H_p^2(p)} = \lambda + 2\Gamma(p).
\end{align}
In particular, the Froude number $F$ is given in terms of $H$ by
\begin{align}
  \label{eqn:FH}
  \frac 1{F^2} = g\int_{-d}^0 \frac{\dd y}{(U(y)-c)^2}
  = g\int_{-m}^0 H_p^3(p)\, \dd p.
\end{align}
In addition, Bernoulli's law can be written as
\begin{align}
  \label{eqn:Bernoulli}
  P + \frac{(u-c)^2+v^2}2 + gy - \frac \lambda 2 - \Gamma(-\psi) \equiv 0.
\end{align}
Indeed, one can see that the left hand side is constant by
differentiating and using \eqref{eqn:ww:x}--\eqref{eqn:ww:y}. The fact
that this constant is zero follows by sending $x \to \pm\infty$ in the
the dynamic boundary condition \eqref{eqn:ww:dynamic}.

\section{Lower bound} \label{sec:lower} 

\begin{lemma} \label{lem:lower}
  Any solitary wave with $\sup u<c$ and the regularity \eqref{eqn:reg}
  satisfies
  \begin{align}
    \label{eqn:lower}
    \left(\frac 1{F^2}  - 1\right) \! \int_{-M}^M \eta \, \dd x
    + \int_{-M}^M \! \int_{-m}^0 \!
    \frac{H_p^3 h^2_q + (2h_p + H_p)(h_p-H_p)^2}{2h_p^2}\, \dd p\, \dd q
    \to  0
    \quad 
    \asa M \to \infty.
  \end{align}
  \begin{proof}
    Defining the function
    \begin{align}
      \label{eqn:Phi}
      \Phi(p) = \int_{-m}^p H_p^3(s)\,\dd s,
    \end{align}
    we note that \eqref{eqn:FH} implies $g\Phi(0) = 1/F^2$.
    Multiplying \eqref{eqn:height:1} by $\Phi$ and integrating by
    parts, we then have, for any $M > 0$,
    \begin{align}
      \notag
      0 &= 
      \int_{-M}^M \int_{-m}^0 \bigg[
      \Big( - \frac{1+h_q^2}{2h_p^2} + \frac 1{2H_p^2} \Big)_p \Phi
      + \Big( \frac{h_q}{h_p} \Big)_q \Phi \bigg]\, \dd p\, \dd q\\
      \label{eqn:dropme}
      &=
      \int_{-M}^M \int_{-m}^0 
      \Big( \frac{1+h_q^2}{2h_p^2} - \frac 1{2H_p^2} \Big) H_p^3
      \, \dd p\, \dd q
      +
      \frac 1{gF^2} \int_{-M}^M
      \Big( - \frac{1+h_q^2}{2h_p^2} + \frac 1{2H_p^2} \Big)(q,0) \,
      \dd q\\
      \notag
      &\qquad +
      \int_{-m}^0 \frac{h_q}{h_p} \Phi\, \dd p \bigg|^{x=M}_{x=-M}.
    \end{align}
    Since $h_q \to 0$ as $q \to \pm\infty$ by \eqref{eqn:height:asym},
    the third term in \eqref{eqn:dropme} vanishes as $M \to \infty$.
    Using the boundary condition \eqref{eqn:height:2} to simplify the
    second term, we obtain
    \begin{align}
      \label{eqn:premagic}
      \int_{-M}^M \int_{-m}^0 
      \Big( \frac{1+h_q^2}{2h_p^2} - \frac 1{2H_p^2} \Big) H_p^3
      \, \dd p\, \dd q
      +
      \frac 1{F^2} \int_{-M}^M (h(q,0)-H(0)) \,
      \dd q
      \to 0
    \end{align}
    as $M \to \infty$. Rewriting the first integrand in
    \eqref{eqn:premagic} as
    \begin{align}
      \label{eqn:magic}
      \Big( \frac{1+h_q^2}{2h_p^2} - \frac 1{2H_p^2} \Big) H_p^3
      &=
      -(h_p-H_p) + \frac{H_p^3 h^2_q + (2h_p + H_p)(h_p-H_p)^2}{2h_p^2},
    \end{align}
    we see that
    \begin{align}
      \label{eqn:postmagic}
      \begin{aligned}
        &\int_{-M}^M \int_{-m}^0 
        \Big( \frac{1+h_q^2}{2h_p^2} - \frac 1{2H_p^2} \Big) H_p^3
        \, \dd p\, \dd q\\
        &\qquad=
        -\int_{-M}^M (h(q,0)-H(0))\, \dd q 
        +\int_{-M}^M \int_{-m}^0 
        \frac{H_p^3 h^2_q + (2h_p + H_p)(h_p-H_p)^2}{2h_p^2}
        \, \dd p\, \dd q.
      \end{aligned}
    \end{align}
    Plugging \eqref{eqn:postmagic} into \eqref{eqn:premagic},
    rearranging terms, and using the identity $h(q,0)-H(0) = \eta(q)$
    from \eqref{eqn:conversion}, we obtain \eqref{eqn:lower} as
    desired.
  \end{proof}
\end{lemma}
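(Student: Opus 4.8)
The plan is to obtain the identity by testing the divergence-form momentum balance \eqref{eqn:height:1} against a carefully chosen function of $p$ alone and integrating by parts, with the test function engineered so that its boundary value at $p=0$ reproduces the Froude number. Concretely, I would set $\Phi(p) = \int_{-m}^p H_p^3(s)\,\dd s$, so that $\Phi(-m)=0$ while $g\Phi(0)=1/F^2$ by the representation \eqref{eqn:FH}. Multiplying \eqref{eqn:height:1} by $\Phi$ and integrating over the truncated strip $[-M,M]\by[-m,0]$, I would integrate the first term by parts in $p$ and the second by parts in $q$, then analyze the resulting pieces before letting $M\to\infty$ at the very end (the integrals $\int_{-M}^M \eta\,\dd q$ need not converge on their own, so the statement must remain a limit).

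The three resulting contributions are handled as follows. The $p$-integration by parts produces a boundary term at $p=-m$ that vanishes since $\Phi(-m)=0$, a boundary term at $p=0$ weighted by $\Phi(0)=1/(gF^2)$, and a bulk term carrying the weight $\Phi_p=H_p^3$. The $q$-integration by parts of the second term leaves only the endpoint pieces $\int_{-m}^0 (h_q/h_p)\Phi\,\dd p$ at $q=\pm M$, which tend to zero as $M\to\infty$ by the decay $h_q\to 0$ in \eqref{eqn:height:asym} (using $\inf h_p>0$). For the $p=0$ boundary term I would invoke Bernoulli's law \eqref{eqn:height:2}, which on the free surface rewrites $-\tfrac{1+h_q^2}{2h_p^2}+\tfrac1{2H_p^2}$ as $g(h-H)$; combined with $\eta=h(q,0)-H(0)$ from \eqref{eqn:conversion} this yields exactly the term $F^{-2}\int_{-M}^M \eta\,\dd q$.

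What remains is to extract a second copy of $\int \eta\,\dd q$ from the bulk term, so as to assemble the coefficient $\tfrac1{F^2}-1$. The key algebraic step is the pointwise identity
\begin{align*}
  \Big(\tfrac{1+h_q^2}{2h_p^2}-\tfrac1{2H_p^2}\Big)H_p^3
  = -(h_p-H_p) + \frac{H_p^3 h_q^2 + (2h_p+H_p)(h_p-H_p)^2}{2h_p^2},
\end{align*}
which I would verify by clearing denominators and expanding. Integrating the linear piece $-(h_p-H_p)$ in $p$ and using the Dirichlet condition $h(q,-m)=0$ from \eqref{eqn:height:3} together with $H(0)=d$ gives $-\int_{-m}^0(h_p-H_p)\,\dd p=-\eta(q)$, contributing $-\int_{-M}^M\eta\,\dd q$; the remaining quadratic piece is precisely the manifestly nonnegative integrand of \eqref{eqn:lower}. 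Adding all contributions and sending $M\to\infty$ yields the claim.

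I expect the main obstacle to be discovering, rather than verifying, both the test function $\Phi$ and this decomposition. The weight $H_p^3$ is forced by the need to match $1/F^2$ in \eqref{eqn:FH}, but it is not a priori evident that the leftover bulk integrand splits into the linear term $-(h_p-H_p)$, whose $p$-integral collapses to $-\eta$, plus a signed quadratic remainder. Once this splitting is in hand, the integrations by parts are routine and the regularity \eqref{eqn:reg} is more than sufficient to justify them.
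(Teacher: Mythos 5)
Your proposal is correct and follows exactly the paper's own argument: the same test function $\Phi(p)=\int_{-m}^p H_p^3(s)\,\dd s$ normalized so that $g\Phi(0)=1/F^2$, the same integration by parts with the boundary terms handled identically (the $p=0$ term via \eqref{eqn:height:2}, the $q=\pm M$ terms via \eqref{eqn:height:asym}), and the same algebraic splitting \eqref{eqn:magic} of the bulk integrand into $-(h_p-H_p)$ plus the nonnegative quadratic remainder. The only cosmetic remark is that collapsing $-\int_{-m}^0(h_p-H_p)\,\dd p$ to $-\eta(q)$ uses $h(q,-m)=0$ together with $H(-m)=0$ (and $h(q,0)-H(0)=\eta$), rather than $H(0)=d$ as you wrote, but this changes nothing in the argument.
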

\begin{proof}[Proof of Theorem~\ref{thm:lower}]\label{proof:lower}
  Consider a nontrivial solitary wave. Then $H_p$ and $h_p$ are strictly
  positive, and $h_q(q,0)=\eta_x(q)$ does not vanish identically. Thus the second
  integrand in \eqref{eqn:lower} is a nondecreasing function of $M$
  and is strictly positive for $M$ sufficiently large, and therefore the
  limit in \eqref{eqn:lower} implies
  \begin{align}
    \label{eqn:happy}
    \limsup_{M \to \infty}\left\{\left( \frac 1{F^2} - 1 \right)  \int_{-M}^M
    \eta \,\dd x\right\} < 0.
  \end{align}
  Since the left hand side of \eqref{eqn:happy} vanishes if $F=1$, we
  must have $F \ne 1$. For a wave of elevation, $\eta(x)
  \ge 0$ for all $x$ but $\eta \not \equiv 0$, so \eqref{eqn:happy}
  implies that the coefficient $1/F^2-1$ is strictly negative,
  i.e.~that $F > 1$. Similarly, for a wave of depression, $\eta(x)
  \le 0$ for all $x$ but $\eta \not \equiv 0$, so \eqref{eqn:happy}
  implies that $1/F^2 - 1$ is strictly positive, i.e.~that $F < 1$.
\end{proof}

\section{Upper bound} \label{sec:upper} 
In this section we will make use of the vorticity function
$\gamma(-p)$, Bernoulli constant $\lambda=(c-U(0))^2$, and
antiderivative $\Gamma(p)$ of $\gamma$ defined in
Section~\ref{sec:prelim}. We begin by giving a formula in our notation
for an invariant called the flow force.
\begin{lemma}\label{lem:force}
  For any solitary wave with $\sup u < c$ and the regularity
  \eqref{eqn:reg} and for any $x$, the flow force
  \begin{align}
    \label{eqn:S}
    S:= \int_{-d}^{\eta(x)} (P+(u-c)^2)(x,y)\, \dd y
    = -2 \int_{-m}^0 \gamma H\, \dd p + \lambda d + \frac{gd^2}2.
  \end{align}
  \begin{proof}
    That $S$ is independent of $x$ is well-known, and can be proved,
    for instance, by integrating the identity $(P+(u-c)^2)_x +
    ((u-c)v)_y = 0$ over a region of the form $\{(x,y) : a < x < b,\
    -d < y < \eta(x)\}$ using the divergence theorem and then applying
    the boundary conditions. To obtain the formula \eqref{eqn:S}, we
    use Bernoulli's law \eqref{eqn:Bernoulli} to rewrite
    \begin{align*}
      P + (u-c)^2 
      = \frac{(u-c)^2-v^2}2 - gy + \frac \lambda 2 + \Gamma
      = \frac{1-h_q^2}{2h_p^2} -g(h-d)+ \frac \lambda 2 + \Gamma.
    \end{align*}
    The asymptotic condition \eqref{eqn:height:asym} gives
    \begin{align*}
      \frac{1-h_q^2}{2h_p^2} \to \frac 1{2H_p^2} = \frac \lambda 2 + \Gamma 
      \asa q \to \pm\infty,
    \end{align*}
    uniformly in $p$, and hence, since $S$ is independent of $x$,
    \begin{align}
      \notag
      S &= 
      \lim_{q\to\pm\infty}  
      \int_{-m}^0 \Big(\frac{1-h_q^2}{2h_p^2}
      -g(h-d)
      + \frac \lambda 2 + \Gamma
      \Big)h_p\,\dd p\\
      \notag
      &=
      \int_{-m}^0 \left(\lambda + 2\Gamma - g(H-d) \right) H_p\, \dd p,\\
      &=
      -2 \int_{-m}^0 \gamma H\, \dd p + \lambda d + \frac{gd^2}2,
    \end{align}
    where in the last step we integrated by parts using
    $H(-m) = \Gamma(0) = 0$ and $\Gamma_p = \gamma$.
  \end{proof}
\end{lemma}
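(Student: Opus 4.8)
The plan is to prove \eqref{eqn:S} in two stages: first that $S$ is independent of $x$, so that it may be evaluated at any convenient location, and then that its value, computed in the limit $x\to\pm\infty$ where the flow reduces to the shear flow $U$, equals the right-hand side of \eqref{eqn:S}. The $x$-independence is the statement that $S$ is a conserved flow force arising from the horizontal momentum balance, and the evaluation is a direct computation using Bernoulli's law \eqref{eqn:Bernoulli} together with the change of variables of Section~\ref{sec:prelim}.

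For the first stage I would verify that the vector field $\big(P+(u-c)^2,\ (u-c)v\big)$ is divergence free. Differentiating, eliminating $P_x$ with the horizontal Euler equation \eqref{eqn:ww:x}, and using incompressibility \eqref{eqn:ww:div} to replace $v_y$ by $-u_x$, the terms $(u-c)u_x$ and $vu_y$ cancel and one is left with
\begin{align*}
  \big(P+(u-c)^2\big)_x + \big((u-c)v\big)_y = 0 \qquad \ina D_\eta.
\end{align*}
Integrating over the truncated domain $\{(x,y) : a<x<b,\ -d<y<\eta(x)\}$ and applying the divergence theorem, the right and left vertical sides contribute the value of $S$ at $x=b$ and minus its value at $x=a$. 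The bottom side contributes nothing because $v=0$ on $y=-d$ by \eqref{eqn:ww:bot}, and along the free surface, whose outward normal is parallel to $(-\eta_x,1)$, the integrand is
\begin{align*}
  \big(P+(u-c)^2\big)(-\eta_x) + (u-c)v = (u-c)^2(-\eta_x) + (u-c)^2\eta_x = 0,
\end{align*}
using $P=0$ and $v=(u-c)\eta_x$ on $y=\eta(x)$ from \eqref{eqn:ww:dynamic} and \eqref{eqn:ww:top}. Hence $S$ takes the same value at $x=a$ and $x=b$.

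For the second stage I would use Bernoulli's law \eqref{eqn:Bernoulli} to write $P+(u-c)^2 = \tfrac12\big((u-c)^2-v^2\big) - gy + \tfrac\lambda2 + \Gamma(-\psi)$ and then pass to the height-function variables via \eqref{eqn:conversion}, so that $(u-c)^2 = h_p^{-2}$, $v^2 = h_q^2 h_p^{-2}$, $y = h-d$, and $\Gamma(-\psi)=\Gamma(p)$. Changing the integration variable from $y$ to $p$ with $\dd y = h_p\,\dd p$ (so that $y=-d$ and $y=\eta$ correspond to $p=-m$ and $p=0$) gives
\begin{align*}
  S = \int_{-m}^0 \Big( \frac{1-h_q^2}{2h_p^2} - g(h-d) + \frac\lambda2 + \Gamma \Big) h_p\, \dd p.
\end{align*}
Letting $q\to\pm\infty$ and invoking the asymptotics \eqref{eqn:height:asym} together with the identity $\tfrac1{2H_p^2}=\tfrac\lambda2+\Gamma$ from \eqref{eqn:useful}, this converges to $\int_{-m}^0\big(\lambda+2\Gamma-g(H-d)\big)H_p\,\dd p$. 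Splitting the integral and integrating by parts with $\Gamma_p=\gamma$, using $H(-m)=0$, $H(0)=d$, and $\Gamma(0)=0$ to kill the boundary terms, then produces exactly $-2\int_{-m}^0\gamma H\,\dd p + \lambda d + \tfrac{gd^2}2$.

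The computation is essentially algebraic once the conservation identity is in hand, so I do not expect a serious obstacle; the two points needing care are analytic bookkeeping. First, passing the limit $q\to\pm\infty$ under the $p$-integral is justified because $(-m,0)$ is bounded and the convergence in \eqref{eqn:height:asym} is uniform in $p$; one also needs $h\to H$ pointwise, which follows by integrating $h_p\to H_p$ from the anchor $h(q,-m)=0=H(-m)$ furnished by \eqref{eqn:height:3}. Second, the divergence-theorem step must be legitimate under the limited regularity \eqref{eqn:reg}, but since the flux field lies in $W^{1,r}_\loc(\Dbar)$ and the truncated domain is Lipschitz, the standard divergence theorem applies.
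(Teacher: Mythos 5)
Your proposal is correct and takes essentially the same route as the paper: first the $x$-independence of $S$, obtained by integrating the divergence identity $(P+(u-c)^2)_x + ((u-c)v)_y = 0$ over a truncated fluid domain and using the boundary conditions, then evaluation of $S$ in the limit $q \to \pm\infty$ via Bernoulli's law \eqref{eqn:Bernoulli}, the height-function change of variables, and a final integration by parts using $H(-m) = \Gamma(0) = 0$. The only difference is that you fill in details the paper leaves as ``well-known'' (the divergence-free verification, the vanishing of the surface and bottom boundary terms, and the justification that $h \to H$ uniformly), all of which are correct.
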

The following integral identity is \eqref{eqn:toogood} but with an
extra term coming from the vorticity. Unlike Lemmas~\ref{lem:lower}
and \ref{lem:extra}, it is proved mostly in the original physical
variables.
\begin{lemma} \label{lem:classic}
  Any solitary wave with $\sup u<c$ and the regularity \eqref{eqn:reg}
  satisfies
  \begin{align}
    \label{eqn:classic}
    (\lambda - gd)\int_{-M}^M \eta\, \dd x 
    -\frac{3g}2 \int_{-M}^M \eta^2 \, \dd x
    -2\int_{-M}^M \int_{-m}^0 \gamma (h-H) \,\dd p\,\dd q
    \to 0
    \quad 
    \asa M \to \infty.
  \end{align}
  \begin{proof}
    Consider the fluid region
    \begin{align*}
      D = \{(x,y) \in \R^2 : -M < x < M,\ -d < y < \eta(x)\},
    \end{align*}
    and the two $W^{1,r}(D)$ vector fields 
    \begin{align*}
      A &= (A^1,A^2) = \big(P+(u-c)^2,(u-c)v\big),\\
      B &= (B^1,B^2) = \big((u-c)v,P+v^2+gy\big).
    \end{align*}
    By the incompressible Euler equations
    \eqref{eqn:ww:x}--\eqref{eqn:ww:div}, $A$ and $B$ are both
    divergence free. Thus
    \begin{align}
      \notag
      \div (xA+(y+d)B) 
      &=
      A^1 + B^2 \\
      \notag
      &= 
      2P + (u-c)^2 + v^2 + gy \\
      &= 
      \label{eqn:abdiv}
      \lambda + 2\Gamma(-\psi) - gy
    \end{align}
    in $L^r(D)$, where in the last step we have used Bernoulli's law
    \eqref{eqn:Bernoulli}. Integrating \eqref{eqn:abdiv} over $D$, the
    divergence theorem gives
    \begin{align}
      \label{eqn:abgreen}
      \int_{\dell D} (xA+(y+d)B) \cdot n \, \dd s
      = \iint_D (\lambda - gy + 2\Gamma(-\psi)) \, \dd y\, \dd x,
    \end{align}
    where $n$ is an outward pointing normal. We will obtain
    \eqref{eqn:classic} by simplifying both sides of
    \eqref{eqn:abgreen} and using Lemma~\ref{lem:force}.

    First consider the left hand side of \eqref{eqn:abgreen}. On the
    free surface, the two boundary conditions $v=\eta_x (u-c)$ and
    $P=0$ give
    \begin{align*}
      (xA+(y+d)B) \cdot n
      = (xA + (y+d)B) \cdot \frac {(v,c-u)}{\sqrt{(u-c)^2+v^2}}
      = \frac {gy(y+d)(c-u)}{\sqrt{(u-c)^2+v^2}},
    \end{align*}
    while on the bottom $y=-d$ the boundary condition $v=0$ implies $A^2 = (y+d)B^2
    = 0$. Thus we see
    \begin{align}
      \label{eqn:abndry}
      \begin{aligned}
        \int_{\dell D} (xA+(y+d)B) \cdot n \, \dd s
        &= 
        x\int_{-d}^{\eta(x)}
        (P+(u-c)^2) \,
        \dd y 
        \bigg|^{x=M}_{x=-M}\\
        &\qquad +
        \int_{-d}^{\eta(x)}
         (y+d)(u-c)v 
         \, \dd y 
        \bigg|^{x=M}_{x=-M}
        +
        \int_{-M}^M g(\eta+d) \eta\, \dd x.
      \end{aligned}
    \end{align}
    By Lemma~\ref{lem:force}, we have 
    \begin{align*}
      \int_{-d}^{\eta(x)} \big(P+(u-c)^2\big) \, \dd y
      = -2\int_{-m}^0 \gamma(-p) H\, \dd p
      + \lambda d + \frac{gd^2}2
    \end{align*}
    for all $x$,
    and since $v \to 0$ uniformly in $y$ as $x \to \pm\infty$, the
    second term on the right hand side of \eqref{eqn:abndry} vanishes
    as $M \to \infty$. Thus \eqref{eqn:abndry} implies
    \begin{align}
      \label{eqn:abndrylim}
      \begin{aligned}
        &
        \int_{\dell D} (xA+(y+d)B) \cdot n \, \dd s
        - g \int_{-M}^M \eta^2 \,\dd x
        \\ &\qquad
        - gd \int_{-M}^M \eta \,\dd x
        - 2M\left(
        \lambda d + \frac{gd^2}2
        - 2 \int_{-m}^0 \gamma H\, \dd p
        \right)
        \to 0
      \end{aligned}
    \end{align}
    as $M \to \infty$.

    Now we turn to the right hand side of \eqref{eqn:abgreen}. 
    Changing variables and integrating by parts, 
    \begin{align*}
      \int_{-d}^{\eta(x)} \Gamma(-\psi)\, \dd y
      = \int_{-m}^0 \Gamma(p) h_p\, \dd p
      = -\int_{-m}^0 \gamma(-p) h\, \dd p,
    \end{align*}
    where we have used that $\Gamma$ vanishes on $p=0$ while $h$
    vanishes on $p=-m$. Thus
    \begin{align}
      \label{eqn:abulk}
      \begin{aligned}
        \iint_D (\lambda -gy + 2\Gamma(-\psi))\, \dd y\, \dd x 
        &= \lambda \int_{-M}^M \eta\, \dd x
        - \frac g2 \int_{-M}^M \eta^2\, \dd x 
        - 2 \int_{-M}^M \int_{-m}^0 \gamma h\, \dd p\, \dd q\\
        &\qquad + 2M\left(\lambda d + \frac{gd^2}2
        \right)
      \end{aligned}
    \end{align}
    Substituting \eqref{eqn:abndrylim} and \eqref{eqn:abulk} into
    \eqref{eqn:abgreen}, most of the terms drop out and we are left
    with \eqref{eqn:classic} as desired.
  \end{proof}
\end{lemma}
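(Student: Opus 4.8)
The plan is to reproduce the classical virial-type derivation of Starr's identity \eqref{eqn:toogood}, as the introduction suggests, but retaining every term generated by the vorticity. The starting point is two divergence-free vector fields coming from the steady momentum balance. From the horizontal and vertical momentum equations \eqref{eqn:ww:x}--\eqref{eqn:ww:y} together with incompressibility \eqref{eqn:ww:div}, the momentum-flux fields $A = (P+(u-c)^2,\,(u-c)v)$ and $B = ((u-c)v,\,P+v^2+gy)$ are both divergence free. The crucial observation is that the weighted field $xA + (y+d)B$ has divergence $A^1 + B^2 = 2P + (u-c)^2 + v^2 + gy$, which Bernoulli's law \eqref{eqn:Bernoulli} converts into the simple expression $\lambda + 2\Gamma(-\psi) - gy$, depending only on $y$ and the stream function.

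First I would integrate this divergence identity over the truncated fluid domain $D = \{-M < x < M,\ -d < y < \eta(x)\}$ and apply the divergence theorem, equating a boundary integral over $\partial D$ with the bulk integral of $\lambda - gy + 2\Gamma(-\psi)$. On the free surface the kinematic condition $v=(u-c)\eta_x$ and the dynamic condition $P=0$ produce heavy cancellation, leaving only a gravitational contribution proportional to $\int g(\eta+d)\eta\,\dd x$; on the bottom the no-penetration condition $v=0$ kills the relevant normal components. The two vertical faces $x=\pm M$ contribute the term $x\int_{-d}^{\eta}(P+(u-c)^2)\,\dd y$ evaluated between $x=\pm M$, together with $\int_{-d}^{\eta}(y+d)(u-c)v\,\dd y$. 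Here Lemma~\ref{lem:force} is essential: it identifies the first integral as the constant flow force $S$, so that the two faces jointly contribute $2MS$, while the second integral decays because $v\to 0$ uniformly.

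To handle the bulk integral I would change variables to $(q,p)$ and integrate by parts in $p$, using $\Gamma(0)=0$ and $h(q,-m)=0$ to replace $\Gamma(-\psi)$ by the vorticity function $\gamma$; this produces the $\gamma(h-H)$ term of \eqref{eqn:classic} along with the gravitational $\eta$ and $\eta^2$ terms and, once more, a piece growing linearly in $M$.

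The step I expect to be the main obstacle is controlling the terms that grow linearly in $M$. Neither the boundary integral nor the bulk integral is individually bounded as $M\to\infty$, since each carries a term proportional to $M$ coming from the nonzero asymptotic momentum flux. The delicate point is that these must cancel exactly, and the explicit evaluation of the flow force in Lemma~\ref{lem:force} is precisely what guarantees that the $2M\bigl(\lambda d + \tfrac{gd^2}{2} - 2\int_{-m}^0 \gamma H\,\dd p\bigr)$ contributions from the two sides coincide and subtract off. Once this cancellation is verified and the residual $v$-dependent face term is discarded via the uniform decay $v\to 0$, the surviving terms assemble into \eqref{eqn:classic}.
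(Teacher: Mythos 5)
Your proposal is correct and follows essentially the same route as the paper's own proof: the same pair of divergence-free momentum-flux fields $A$ and $B$, the same weighted field $xA+(y+d)B$ simplified via Bernoulli's law, the divergence theorem on the truncated domain, Lemma~\ref{lem:force} to evaluate the side-face contributions, and the change of variables with integration by parts in $p$ for the bulk term. The only difference is cosmetic---you split $\gamma h = \gamma(h-H) + \gamma H$ inside the bulk integral so that both sides carry the identical $2MS$ term, whereas the paper cancels $2M(\lambda d + \tfrac{gd^2}{2})$ and absorbs the remaining $O(M)$ piece $-4M\int_{-m}^0 \gamma H\,\dd p$ into the double integral---which is mathematically equivalent.
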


\begin{lemma} \label{lem:extra}
  Any solitary wave with $u<c$ and the regularity \eqref{eqn:reg} satisfies 
  \begin{align}
    \label{eqn:extra}
    \frac{3g}2 \int_{-M}^M \eta^2 \, \dd x
    - \int_{-M}^M \int_{-m}^0 
    \frac{H_p^3 h^2_q + (2h_p + H_p)(h_p-H_p)^2}{2H_p^2 h_p^2}\, \dd p\, \dd q
    \to  0
    \quad 
    \asa M \to \infty.
  \end{align}
  \begin{proof}
    We argue as in the proof of Lemma~\ref{lem:lower}, but with the
    function $\Phi$ replaced by $H$, and then appeal to
    Lemma~\ref{lem:classic}. Multiplying \eqref{eqn:height:1} by $H$
    and integrating by parts, we have, for any $M > 0$,
    \begin{align}
      \notag
      0 &= 
      \int_{-M}^M \int_{-m}^0 \bigg[
      \Big( - \frac{1+h_q^2}{2h_p^2} + \frac 1{2H_p^2} \Big)_p H
      + \Big( \frac{h_q}{h_p} \Big)_q H \bigg]\, \dd p\, \dd q\\
      \label{eqn:dropmeagain}
      &=
      \int_{-M}^M \int_{-m}^0 
      \Big( \frac{1+h_q^2}{2h_p^2} - \frac 1{2H_p^2} \Big) H_p
      \, \dd p\, \dd q
      +
      gd\int_{-M}^M
      \eta\,
      \dd q
      +
      \int_{-m}^0 \frac{h_q}{h_p} H\, \dd p \bigg|^{x=M}_{x=-M},
    \end{align}
    where we have used the boundary condition \eqref{eqn:height:2} as
    well as $h(q,0)-H(0)=\eta(q)$ and $H(0)=d$. As in the proof of
    Lemma~\ref{lem:lower}, the asymptotic conditions
    \eqref{eqn:height:asym} imply that the last term in
    \eqref{eqn:dropmeagain} vanishes as $M \to \infty$. The integrand
    in the first term of \eqref{eqn:dropmeagain} can be rewritten as
    \begin{align}
      \label{eqn:magicagain}
      \Big( \frac{1+h_q^2}{2h_p^2} - \frac 1{2H_p^2} \Big) H_p
      &=
      -\frac{h_p-H_p}{H_p^2} + \frac{H_p^3 h^2_q + (2h_p + H_p)(h_p-H_p)^2}{2h_p^2 H_p^2},
    \end{align}
    as can be seen by dividing \eqref{eqn:magic} by $H_p^2$.
    Since
    \begin{align*}
      \Big(\frac 1{H_p^2}\Big)_p = (\lambda + 2\Gamma)_p = 2\gamma,
      \qquad 
      \frac 1{H_p^2(0)} = \lambda,
    \end{align*}
    we can integrate the first term of \eqref{eqn:magicagain} by parts to get
    \begin{align*}
      - \int_{-m}^0 \frac{h_p-H_p}{H_p^2}(q,p) \, \dd p
      &= 2\int_{-m}^0 \Big(\frac 1{H_p^2}\Big)_p (h(q,p)-H(p)) \, \dd p - \lambda (h(q,0)-H(0))\\
      &= 2\int_{-m}^0 \gamma (h(q,p)-H(p)) \, \dd p - \lambda \eta(q).
    \end{align*}
    Putting everything together, we see that \eqref{eqn:dropmeagain} implies
    \begin{align}
      \label{eqn:extraterm}
      \begin{aligned}
        &\int_{-M}^M \int_{-m}^0 
        \frac{H_p^3 h^2_q + (2h_p + H_p)(h_p-H_p)^2}{2h_p^2 H_p^2}
        \, \dd p\, \dd q
        +2 \int_{-M}^M \int_{-m}^0 
        \gamma (h-H) 
        \, \dd p\, \dd q\\
        &\qquad 
        + (gd-\lambda)\int_{-M}^M \eta\, \dd q
        \longrightarrow 0
        \qquad \asa M \to \infty
      \end{aligned}
    \end{align}
    Adding \eqref{eqn:extraterm} with the conclusion
    \eqref{eqn:classic} of Lemma~\ref{lem:classic}, most of the terms
    cancel and we are left with \eqref{eqn:extra} as desired.
  \end{proof}
\end{lemma}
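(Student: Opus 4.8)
The plan is to reuse the integration-by-parts scheme from the proof of Lemma~\ref{lem:lower} essentially verbatim, but with the multiplier $\Phi$ replaced by the asymptotic height function $H$ itself, and then to eliminate the resulting vorticity and mass terms by adding the identity \eqref{eqn:classic} of Lemma~\ref{lem:classic}. The guiding principle is that $H$ is exactly the multiplier for which the linear remainder produced by integrating by parts in $p$ reassembles into the vorticity integral $\int_{-m}^0\gamma(h-H)\,\dd p$ and the term $\lambda\eta$ --- precisely the quantities that Lemma~\ref{lem:classic} also isolates. Selecting this multiplier so that the two identities share these terms is the one genuinely clever point; everything else is bookkeeping.

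Concretely, first I would multiply the momentum equation \eqref{eqn:height:1} by $H(p)$ and integrate over the rectangle $(-M,M)\times(-m,0)$, integrating by parts in both $p$ and $q$. The $q$-boundary contribution $\int_{-m}^0 (h_q/h_p)\,H\,\dd p\,\big|_{q=-M}^{q=M}$ vanishes as $M\to\infty$ because $h_q\to 0$ by \eqref{eqn:height:asym}. In the $p$-integration by parts the lower endpoint $p=-m$ contributes nothing since $H(-m)=0$, while the upper endpoint $p=0$ is handled with the Bernoulli boundary condition \eqref{eqn:height:2} together with $H(0)=d$ and $h(q,0)-H(0)=\eta(q)$, producing the term $gd\int_{-M}^M\eta\,\dd q$. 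This leaves the bulk integral of $\big(\tfrac{1+h_q^2}{2h_p^2}-\tfrac{1}{2H_p^2}\big)H_p$ to be dealt with.

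Next I would rewrite that bulk integrand by dividing the algebraic identity \eqref{eqn:magic} through by $H_p^2$, splitting it into the nonnegative quadratic density $\frac{H_p^3 h_q^2+(2h_p+H_p)(h_p-H_p)^2}{2h_p^2 H_p^2}$, which is exactly the integrand appearing in \eqref{eqn:extra}, plus the linear remainder $-(h_p-H_p)/H_p^2$. The key computation is to integrate this remainder by parts in $p$: writing $h_p-H_p=(h-H)_p$ and using $\big(1/H_p^2\big)_p=(\lambda+2\Gamma)_p=2\gamma$ from \eqref{eqn:useful} (since $\Gamma_p=\gamma$) together with $1/H_p^2(0)=\lambda$ and the vanishing of $h-H$ at $p=-m$, one finds $-\int_{-m}^0 (h_p-H_p)/H_p^2\,\dd p = 2\int_{-m}^0\gamma(h-H)\,\dd p-\lambda\eta$. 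Substituting back yields an intermediate identity: as $M\to\infty$, the quadratic double integral plus $2\int_{-M}^M\int_{-m}^0\gamma(h-H)\,\dd p\,\dd q+(gd-\lambda)\int_{-M}^M\eta\,\dd q$ tends to $0$.

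Finally I would add this intermediate identity to the conclusion \eqref{eqn:classic} of Lemma~\ref{lem:classic}. The vorticity double integrals cancel in pairs, and the $(gd-\lambda)\int\eta$ and $(\lambda-gd)\int\eta$ contributions cancel as well, leaving only the quadratic double integral together with $-\tfrac{3g}{2}\int\eta^2$, which is \eqref{eqn:extra} up to an overall sign. I expect the main obstacle to be not any single estimate --- the regularity \eqref{eqn:reg} legitimizes each integration by parts and the asymptotic decay kills the $q$-boundary term --- but rather arranging the $p$-integration by parts so that its output matches Lemma~\ref{lem:classic} term for term; this alignment is what makes the messy vorticity and mass contributions disappear and exposes the clean relation \eqref{eqn:extra}.
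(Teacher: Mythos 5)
Your proposal is correct and follows essentially the same route as the paper's own proof: the same multiplier $H$ applied to \eqref{eqn:height:1}, the same splitting of the bulk integrand by dividing \eqref{eqn:magic} by $H_p^2$, the same integration by parts in $p$ using $(1/H_p^2)_p = 2\gamma$ and $1/H_p^2(0) = \lambda$ with $h-H$ and $H$ vanishing at $p=-m$, and the same final cancellation upon adding \eqref{eqn:classic} from Lemma~\ref{lem:classic}. Incidentally, your version of the linear-remainder computation, $-\int_{-m}^0 (h_p-H_p)H_p^{-2}\,\dd p = 2\int_{-m}^0 \gamma(h-H)\,\dd p - \lambda\eta$, is the correct one; the paper's intermediate display carries a stray factor of $2$ in front of $\int_{-m}^0 (1/H_p^2)_p(h-H)\,\dd p$, though its final line agrees with yours.
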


Using Lemmas~\ref{lem:lower} and \ref{lem:extra}, we can now prove
Theorem~\ref{thm:upper}.
\begin{proof}[Proof of Theorem~\ref{thm:upper}]
  The key observation is that the integrands of the double
  integrals in \eqref{eqn:lower} and \eqref{eqn:extra} differ only by
  a factor of $H_p^2$. 
  Plugging the crude estimate
  \begin{align*}
    &\int_{-M}^M \int_{-m}^0 
    \frac{H_p^3 h^2_q + (2h_p + H_p)(h_p-H_p)^2}{2h_p^2}
    \, \dd p\, \dd q\\
    &\qquad\qquad\le 
    \max H_p^2
    \int_{-M}^M \int_{-m}^0 
    \frac{H_p^3 h^2_q + (2h_p + H_p)(h_p-H_p)^2}{2h_p^2H_p^2}
    \, \dd p\, \dd q
  \end{align*}
  into the result \eqref{eqn:lower} of Lemma~\ref{lem:lower}, we
  obtain
  \begin{align*}
    \limsup_{M \to \infty} 
    &
    \Bigg\{
    \left(1- \frac 1{F^2}\right) \int_{-M}^M \eta \, \dd x
    \\ &\qquad
    - \max H_p^2 \int_{-M}^M \int_{-m}^0 
    \frac{H_p^3 h^2_q + (2h_p + H_p)(h_p-H_p)^2}{2h_p^2H_p^2}
    \, \dd p\, \dd q
    \Bigg\}
    \le 0.
  \end{align*}
  Subtracting the conclusion \eqref{eqn:extra} of Lemma~\ref{lem:extra}
  multiplied by $\max H_p^2$, we then get
  \begin{align}
    \label{eqn:liminf}
    \limsup_{M \to \infty} \left\{
    \bigg(1- \frac 1{F^2}\bigg) \int_{-M}^M \eta \, \dd x
    -\frac{3 g\max H_p^2}2 \int_{-M}^M 
    \eta^2 \, \dd q
    \right\}
    \le 0.
  \end{align}
  But from Bernoulli's law \eqref{eqn:Bernoulli} evaluated on the free
  surface we have $g\max \eta < \lambda/2$, so that \eqref{eqn:liminf} implies
  \begin{align*}
    \limsup_{M \to \infty}
    \bigg(1- \frac 1{F^2} -\frac{3\lambda} 4 \max H_p^2 \bigg) \int_{-M}^M \eta \, \dd x
    < 0,
  \end{align*}
  and hence
  \begin{align}
    \label{eqn:prearranged}
    1- \frac 1{F^2} -\frac{3 \lambda}4 \max H_p^2 < 0.
  \end{align}
  By \eqref{eqn:lambda} and \eqref{eqn:useful},
  $\lambda = (U-c)^2(0)$ and $H_p^2 = (U-c)^{-2}$,
  so 
  \begin{align*}
    \lambda \max_p H_p^2 = \max_y \frac{(c-U(0))^2}{(c-U(y))^2} =
    \Lambda^2,
  \end{align*}
  and \eqref{eqn:prearranged} becomes
  \begin{align*}
    1- \frac 1{F^2} - \frac{3\Lambda^2}4 < 0,
  \end{align*}
  which, assuming $\Lambda < 2/\sqrt{3}$, is equivalent to the desired
  upper bound on $F$ in \eqref{eqn:upper}.
\end{proof}

\section{Amplitude, elevation, symmetry, monotonicity, and decay} \label{sec:imp}


In this section we will give several corollaries of
Theorems~\ref{thm:lower}--\ref{thm:upper} and their proofs. Some of
these will require stronger regularity and decay assumptions than
\eqref{eqn:reg} and \eqref{eqn:ww:asym}, namely
\begin{gather}
  \label{eqn:regmore}
  h \in C^{2+\alpha}(\Obar),
  \qquad 
  H \in C^{2+\alpha}[-d,0],\\
  \label{eqn:asymmore}
  h-H,\ D(h-H),\ D^2(h-H) \to 0 \asa q \pm \infty,
\end{gather}
where the limit in \eqref{eqn:asymmore} is uniform in $p$. 


\begin{corollary}[Bound on the amplitude]\label{cor:height}
  In the setting of Theorem~\ref{thm:upper}, the maximum amplitude
  $\max_x \eta(x)$ satisfies the following upper bound
  \begin{align}
    \label{eqn:ugly}
    \frac{\max \eta}d < \frac{(c-U(0))^2}{2gd} < \frac 12
    \frac{\Lambda^2}{1- \frac 34 \Lambda^2}.
  \end{align}
  \begin{proof}
    The first inequality in \eqref{eqn:ugly} is just Bernoulli's law
    \eqref{eqn:Bernoulli} evaluated on the free surface. Next, we note
    that the definitions \eqref{eqn:froude}, \eqref{eqn:Lambda}, and
    \eqref{eqn:lambda} of $F$ and $\Lambda$ immediately imply the
    simple inequality $(c-U(0))^2 \le g d\Lambda^2 F^2$. The second
    inequality in \eqref{eqn:ugly} then follows from the upper bound
    on $F$ in Theorem~\ref{thm:upper}. 
  \end{proof}
\end{corollary}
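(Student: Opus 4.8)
The plan is to establish Corollary~\ref{cor:height} by chaining together two elementary inequalities, the first physical and the second a direct consequence of the definitions. The statement to prove consists of two inequalities in \eqref{eqn:ugly}: a bound relating $\max\eta/d$ to the relative surface speed $(c-U(0))^2$, and a bound relating that relative speed to $\Lambda$ and the already-established upper bound on $F$ from Theorem~\ref{thm:upper}. I would treat these two pieces separately and then compose them.

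For the first inequality, I would invoke Bernoulli's law \eqref{eqn:Bernoulli} restricted to the free surface. On $p=0$ we have $P=0$ (the dynamic boundary condition) and $\Gamma(0)=0$, so evaluating \eqref{eqn:Bernoulli} at a crest gives $\frac12\big((u-c)^2+v^2\big) + g\eta_{\max} = \lambda/2$ there; dropping the nonnegative $v^2$ term and discarding $(u-c)^2\ge 0$ yields the strict inequality $g\max\eta < \lambda/2 = (c-U(0))^2/2$. This is precisely the estimate $g\max\eta < \lambda/2$ already used in the proof of Theorem~\ref{thm:upper}, so it may be quoted rather than rederived. Dividing by $gd$ produces the first inequality $\max\eta/d < (c-U(0))^2/(2gd)$.

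For the second inequality, the key observation is that the definitions \eqref{eqn:froude}, \eqref{eqn:Lambda}, and \eqref{eqn:lambda} combine to give a clean bound on $(c-U(0))^2$ in terms of $F$ and $\Lambda$. Writing $\Lambda^2 = (c-U(0))^2 \max_y (c-U(y))^{-2}$ from \eqref{eqn:Lambda}, and recalling that $1/F^2 = g\int_{-d}^0 (c-U(y))^{-2}\,\dd y$ from \eqref{eqn:froude}, I would bound the integrand pointwise by its maximum: $\int_{-d}^0 (c-U)^{-2}\,\dd y \le d\,\max_y(c-U)^{-2}$, so that $1/F^2 \le gd\,\max_y(c-U)^{-2} = gd\,\Lambda^2/(c-U(0))^2$. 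Rearranging gives the simple inequality $(c-U(0))^2 \le gd\,\Lambda^2 F^2$. Substituting the upper bound $F^2 < (1-\tfrac34\Lambda^2)^{-1}$ from Theorem~\ref{thm:upper} then yields $(c-U(0))^2/(2gd) < \tfrac12\,\Lambda^2/(1-\tfrac34\Lambda^2)$, which is the second inequality in \eqref{eqn:ugly}.

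There is no serious obstacle here; the result is a corollary in the genuine sense, and the only mild care needed is in tracking which inequalities are strict. The strictness of the final bound comes from the strict upper bound on $F$ in Theorem~\ref{thm:upper}, while the strictness of the first inequality comes from discarding the positive kinetic-energy terms in Bernoulli's law. The pointwise bound on the integral defining $1/F^2$ is non-strict, but since it is composed with a strict inequality the overall chain remains strict. The whole argument is two lines once the ingredients are assembled, so I would present it compactly, citing Theorem~\ref{thm:upper} and the definitions rather than reproducing their content.
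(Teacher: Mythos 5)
Your proof is correct and follows essentially the same route as the paper: the first inequality from Bernoulli's law \eqref{eqn:Bernoulli} evaluated on the free surface (where $P=0$ and $\Gamma(0)=0$), and the second from the elementary bound $(c-U(0))^2 \le gd\,\Lambda^2 F^2$ obtained by estimating the integral in \eqref{eqn:froude} by its maximum integrand, combined with the bound on $F$ from Theorem~\ref{thm:upper}. You merely spell out the details that the paper leaves as ``immediate,'' including the correct observation that strictness of the first inequality follows from $\sup u < c$.
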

When $U(y) \le U(0)$ for $-d \le y \le 0$, such as for instance when
$\omega \le 0$ so that $U_y \ge 0$, we have $\Lambda = 1$ and hence
that the second inequality in \eqref{eqn:ugly} is the simple bound
$\max \eta \le 2d$. See Section~\ref{sec:const} for the case of
constant vorticity.
\begin{corollary}[Elevation]\label{cor:elevation}
  A solitary wave with $\sup u < c$ and the regularity
  \eqref{eqn:regmore} and decay \eqref{eqn:asymmore} is a wave of
  elevation if and only if $F > 1$, and in this case it is a strict
  wave of elevation in that $\eta(x) > 0$ for all $x \in \R$.
  \begin{proof}
    Thanks to Theorem~\ref{thm:lower}, this is an immediate
    consequence of Proposition~2.1 in \cite{paper}, which states that
    all waves with $F \ge 1$ and the above regularity and decay are
    strict waves of elevation.
  \end{proof}
\end{corollary}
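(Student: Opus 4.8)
The plan is to read the biconditional as the conjunction of two one-line deductions, each supplied by a result already at hand, and then to observe that the strictness assertion comes along for free. Before applying anything I would record that the hypotheses here are \emph{stronger} than those of Theorem~\ref{thm:lower}: the regularity \eqref{eqn:regmore} implies \eqref{eqn:reg}, and $\sup u < c$ is assumed throughout, so Theorem~\ref{thm:lower} is directly applicable to the wave under consideration.

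For the forward implication, suppose the wave is one of elevation. Then by the definition in Section~\ref{sec:intro:results} we have $\eta \ge 0$ with $\eta \not\equiv 0$, and the ``moreover'' clause of Theorem~\ref{thm:lower} immediately yields $F > 1$. Nothing further is needed in this direction.

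For the reverse implication I would invoke Proposition~2.1 of \cite{paper}, which asserts that under the regularity \eqref{eqn:regmore} and decay \eqref{eqn:asymmore} every wave with $F \ge 1$ satisfies $\eta(x) > 0$ for all $x \in \R$. Since $F > 1$ certainly entails $F \ge 1$, this single application does double duty: it produces $\eta > 0$ everywhere, which is exactly the strict-elevation conclusion of the corollary, and in particular it gives $\eta \ge 0$ with $\eta \not\equiv 0$, so that the wave qualifies as a wave of elevation. The same invocation therefore also settles the ``in this case'' clause, since a wave of elevation has $F > 1$ by the forward direction and hence is strict.

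The ``main obstacle'' is thus not analytic but organizational: the real content is the recognition that Theorem~\ref{thm:lower} and Proposition~2.1 of \cite{paper} are precisely complementary, the former forbidding elevation whenever $F \le 1$ and the latter forcing strict elevation whenever $F \ge 1$. Accordingly the only verification I would make explicit is that the strengthened hypotheses \eqref{eqn:regmore}--\eqref{eqn:asymmore} imported from \cite{paper} still subsume those of Theorem~\ref{thm:lower}, so that both results may be applied to one and the same wave.
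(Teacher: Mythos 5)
Your proposal is correct and follows exactly the paper's own argument: the forward direction is the ``moreover'' clause of Theorem~\ref{thm:lower}, and the reverse direction together with strictness comes from Proposition~2.1 of \cite{paper} applied with $F \ge 1$. The paper compresses this into one sentence, while you usefully make explicit the (true) observation that the hypotheses \eqref{eqn:regmore}--\eqref{eqn:asymmore} subsume \eqref{eqn:reg}, so both results apply to the same wave.
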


While there are no symmetric and monotone irrotational solitary waves
of depression \cite{kp}, it is an open question if the same is true
with vorticity. More generally, it is unknown if there are any
solitary waves with subcritical Froude number $F < 1$. By
Theorem~\ref{thm:lower}, no such wave could be a wave of elevation,
and in fact \eqref{eqn:lower} implies something stronger.
\begin{corollary}[Negative mass for subcritical waves]\label{cor:depression}
  Any nontrivial solitary wave with $\sup u < c$, the regularity
  \eqref{eqn:reg}, and subcritical Froude number $F < 1$ must have 
  \begin{align*}
    \limsup_{M\to \infty} \int_{-M}^M \eta\, \dd x < 0.
  \end{align*}
  \begin{proof}
    This follows immediately from \eqref{eqn:happy}.
  \end{proof}
\end{corollary}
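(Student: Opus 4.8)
The plan is to read off the result directly from the inequality \eqref{eqn:happy}, which was established in the proof of Theorem~\ref{thm:lower}. The key point is that \eqref{eqn:happy} holds for \emph{every} nontrivial solitary wave with $\sup u < c$ and the regularity \eqref{eqn:reg}, before any assumption is made about the sign of $1/F^2 - 1$ or about whether the wave is one of elevation or depression. Thus for the waves under consideration here we already know
\[
  \limsup_{M \to \infty}\left\{\left( \frac 1{F^2} - 1 \right)  \int_{-M}^M \eta \,\dd x\right\} < 0.
\]

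Next I would use the hypothesis $F < 1$, which makes the coefficient $1/F^2 - 1$ a strictly positive constant. Since a positive constant may be factored out of a $\limsup$, this gives
\[
  \left( \frac 1{F^2} - 1 \right) \limsup_{M \to \infty} \int_{-M}^M \eta \,\dd x
  = \limsup_{M \to \infty}\left\{\left( \frac 1{F^2} - 1 \right)  \int_{-M}^M \eta \,\dd x\right\} < 0,
\]
and dividing through by the positive quantity $1/F^2 - 1$ yields the asserted bound $\limsup_{M \to \infty} \int_{-M}^M \eta \,\dd x < 0$.

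The only point requiring a moment's care is pulling the constant out of the $\limsup$: since \eqref{eqn:happy} guarantees that the right-hand side above is a finite negative number, the factored $\limsup$ cannot equal $+\infty$, so it is either finite or $-\infty$, and in either case its product with the positive constant is negative precisely when the $\limsup$ itself is. I do not anticipate any genuine obstacle, since the entire substance of the corollary is already contained in the previously derived inequality \eqref{eqn:happy}; the remaining manipulation is elementary and sign-driven, exactly paralleling the wave-of-elevation and wave-of-depression dichotomy drawn from \eqref{eqn:happy} in the proof of Theorem~\ref{thm:lower}.
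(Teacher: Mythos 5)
Your proof is correct and is essentially identical to the paper's: the paper's entire proof is ``This follows immediately from \eqref{eqn:happy},'' and your write-up simply makes explicit the implicit steps—that \eqref{eqn:happy} holds for any nontrivial wave with $\sup u < c$ regardless of the sign of $1/F^2 - 1$, and that factoring out the strictly positive constant $1/F^2 - 1$ (valid for a $\limsup$, including the edge case $-\infty$) yields $\limsup_{M\to\infty}\int_{-M}^M \eta\,\dd x < 0$. Nothing further is needed.
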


The following corollary is
interesting only in the method of proof; the conclusion of
Corollary~\ref{cor:hur}, which follows from \cite{hur:symmetry}, is
much stronger.
\begin{corollary}[Finite mass]\label{prop:mass}
  For any solitary wave of elevation with $\sup u < c$ and the
  regularity \eqref{eqn:reg}, all of the definite integrals appearing
  in Lemmas~\ref{lem:lower}, \ref{lem:classic}, and \ref{lem:extra},
  have a (finite) limit as $M \to \infty$. In particular, the limits
  in \eqref{eqn:lower}, \eqref{eqn:classic}, and \eqref{eqn:extra}
  become equalities when $M$ is replaced by $+\infty$.
  \begin{proof}
    We argue as in Section~3 of \cite{mcleod}. Assume for
    contradiction that 
    \begin{align*}
      \int_{-M}^M \eta\, \dd x \to \infty 
      \asa M \to \infty.
    \end{align*}
    Since $\eta \to 0$ as $x \to \pm\infty$, we have 
    \begin{align*}
      \frac{\int_{-M}^M \eta^2\, \dd x }{\int_{-M}^M \eta\, \dd x} 
      \to 0 \asa M \to \infty.
    \end{align*}
    But then, since $F > 1$ by Theorem~\ref{thm:lower}, the left hand
    side of \eqref{eqn:liminf} tends to $+\infty$ as $M \to \infty$, a
    contradiction. Thus $\nint\infty \eta\, \dd x$ and hence $\nint\infty
    \eta^2 \, \dd x$ are both finite. The statement then follows by
    combining this result with the limits in \eqref{eqn:lower},
    \eqref{eqn:classic}, and \eqref{eqn:extra}.
  \end{proof}
\end{corollary}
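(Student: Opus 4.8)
The plan is to reproduce the finite‑mass argument of McLeod, the essential point being that the only alternative to convergence can be excluded by combining the auxiliary inequality \eqref{eqn:liminf} with the strict bound $F > 1$ coming from Theorem~\ref{thm:lower}. First I would observe that, because the wave is one of elevation, $\eta \ge 0$, so $M \mapsto \int_{-M}^M \eta\, \dd x$ is nondecreasing and therefore either tends to a finite limit or diverges to $+\infty$. I would assume for contradiction that it diverges.

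Next I would show that in that case the ratio $\int_{-M}^M \eta^2\, \dd x \big/ \int_{-M}^M \eta\, \dd x$ tends to $0$. This is where the asymptotic decay $\eta \to 0$ as $x \to \pm\infty$ enters: given $\varepsilon > 0$, choose $R$ so large that $\eta(x) < \varepsilon$ for $\abs x > R$; then $\int_{-M}^M \eta^2\, \dd x$ decomposes into a fixed contribution from $\abs x \le R$ plus a tail bounded by $\varepsilon \int_{-M}^M \eta\, \dd x$. Dividing by the diverging quantity $\int_{-M}^M \eta\, \dd x$ sends the fixed part to $0$ and leaves at most $\varepsilon$, and since $\varepsilon$ is arbitrary the ratio vanishes in the limit.

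The crux is then to divide \eqref{eqn:liminf} through by $\int_{-M}^M \eta\, \dd x$. Since the coefficient multiplying $\int_{-M}^M \eta^2\, \dd x$ there is a fixed constant and the ratio just computed tends to $0$, letting $M \to \infty$ forces $1 - 1/F^2 \le 0$, that is $F \le 1$, contradicting the bound $F > 1$ supplied by Theorem~\ref{thm:lower} for a wave of elevation. I expect this to be the only delicate step, and it relies on the fact that \eqref{eqn:liminf} is obtained by purely algebraic combination of Lemmas~\ref{lem:lower} and \ref{lem:extra} and so is valid for \emph{every} wave of elevation with $\sup u < c$ and the regularity \eqref{eqn:reg}, the hypothesis $\Lambda < 2/\sqrt 3$ entering the proof of Theorem~\ref{thm:upper} only afterward through Bernoulli's law.

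Having ruled out divergence, I conclude that $\int_{-M}^M \eta\, \dd x$ has a finite limit, and hence so does $\int_{-M}^M \eta^2\, \dd x$, being dominated by $\max\eta$ times the former. The remaining integrals then converge directly from the three lemmas: the nonnegative double integral in \eqref{eqn:lower} equals $(1 - 1/F^2)\int_{-M}^M \eta\, \dd x$ up to a term vanishing as $M \to \infty$ and so converges; the double integral in \eqref{eqn:extra} converges because it differs from $\tfrac{3g}2\int_{-M}^M \eta^2\, \dd x$ by a vanishing quantity; and the vorticity integral in \eqref{eqn:classic} converges because the other two terms appearing there now do. Passing to the limit turns each of \eqref{eqn:lower}, \eqref{eqn:classic}, and \eqref{eqn:extra} into an equality with $M = +\infty$, as claimed.
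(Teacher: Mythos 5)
Your proposal is correct and follows essentially the same route as the paper: the McLeod-style contradiction argument in which divergence of $\int_{-M}^M \eta\,\dd x$ together with $\eta \to 0$ forces the ratio $\int_{-M}^M \eta^2\,\dd x \big/ \int_{-M}^M \eta\,\dd x \to 0$, which combined with \eqref{eqn:liminf} (valid without the hypothesis $\Lambda < 2/\sqrt 3$, as you correctly note) contradicts $F>1$ from Theorem~\ref{thm:lower}. You merely make explicit two steps the paper leaves to the reader --- the $\varepsilon$-decomposition proving the ratio limit and the final bookkeeping extracting convergence of each double integral from \eqref{eqn:lower}, \eqref{eqn:extra}, and then \eqref{eqn:classic} --- and both are carried out correctly.
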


In order to state the final corollary in this section, we introduce
the Sturm-Liouville problem
\begin{align}
  \label{eqn:sturm}
  \begin{aligned}
    &\left(\frac{\varphi_p}{H_p^3}\right)_p + \mu \frac{\varphi_p}{H_p}
    = 0 \fora {-m} < p < 0,\\
    &\frac{\varphi_p(0)}{H_p^3} - g\varphi(0) = 0,
    \qquad 
    \varphi(-m) = 0,
  \end{aligned}
\end{align}
which appears when studying the linearization of \eqref{eqn:height}
around $h=H$. We define $\mu_1,\mu_2$ to be the smallest and second
smallest eigenvalues of \eqref{eqn:sturm}, and $\varphi_1$ to be the
eigenfunction corresponding to $\mu_1$. When $F > 1$, $\mu_1$ is
positive, and we can take $\varphi_1$ to be positive for $-m < p \le
0$ \citep{hur:symmetry}. We note that \eqref{eqn:sturm} is equivalent
the system
\begin{align*}
  \begin{aligned}
    &(U-c)(\til\varphi_{yy} + \mu\til\varphi) - U_{yy}\til\varphi = 0
    \fora {-d} < y < 0,\\
    &(U-c)^2 \til\varphi_y(0) - (g+(U-c)U_y)\til\varphi(0) = 0,
    \qquad 
    \til\varphi(-d) = 0,
  \end{aligned}
\end{align*}
for $\til\varphi(y) := (c-U(y))\varphi(p)$ in the original physical
variables; see Lemma~2.3 in \cite{hl:running}.
\begin{corollary}[Symmetry, monotonicity, and decay]\label{cor:hur}
  Any solitary wave of elevation with $\sup u < c$ and the regularity
  \eqref{eqn:regmore} and decay \eqref{eqn:asymmore} has the following
  properties. 
  \nat{\begin{enumerate}}
    {\begin{enumerate}[label=\textup{(\alph*)}]}
  \item[\rm (a)] \label{cor:hur:sym} (Symmetry and monotonicity) 
    The wave is symmetric and monotone in that, after shifting the
    definition of the horizontal variable $q$, the height function $h$
    is even in $q$ and has $h_q < 0$ for $q > 0$ and $-m < p \le 0$.
    In particular, after this shift $\eta$ is an even function of $x$
    with $\eta_x < 0$ for $x > 0$.
  \item[\rm (b)] \label{cor:hur:decay} (Decay and asymptotics) 
    The difference $w:= h-H$ decays exponentially as $q\to\pm\infty$,
    and satisfies the asymptotic estimate
    \begin{align*}
      \abs{D^k(w(q,p)-r\varphi_1(p)e^{-\sqrt{\mu_1}\abs q})}
      \le 
      C e^{-s_1 \abs q}
      \qquad \fora k \le 1, \ \abs q > 1,
    \end{align*}
    for some constants $C,r > 0$ depending on $w$, where $\mu_1 > 0$
    and $\varphi_1$ are defined above in terms of the Sturm-Liouville
    problem \eqref{eqn:sturm} and the exponent $s_1$ appearing on the
    right hand side satisfies $\sqrt{\mu_1} < s_1 <
    \min(2\sqrt{\mu_1},\sqrt{\mu_2})$.
  \end{enumerate}
  \begin{proof}
    Since the waves considered in this corollary have $F > 1$ by
    Theorem~\ref{thm:lower}, part \nat{(a)}{\ref{cor:hur:sym}} follows
    immediately from Theorem~3.1 in \cite{hur:symmetry}, while part
    \nat{(b)}{\ref{cor:hur:decay}} follows from Proposition 4.6 in the
    same paper.
  \end{proof}
\end{corollary}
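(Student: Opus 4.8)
The plan is to reduce both parts of the corollary to results already established by Hur \cite{hur:symmetry}, whose hypotheses include the supercriticality condition $F > 1$. The key point is that, for the waves considered here, this condition comes for free: the hypotheses include $\sup u < c$, the wave being one of elevation, and the regularity \eqref{eqn:regmore}, which implies \eqref{eqn:reg}. Theorem~\ref{thm:lower} therefore applies and yields $F > 1$. Once this is in hand, there is no further analysis to perform, only verification that the remaining hypotheses of the cited theorems are met and that the stronger regularity \eqref{eqn:regmore} and decay \eqref{eqn:asymmore} assumed here are at least as strong as those theorems require.

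For part (a), I would invoke the moving-plane argument carried out in the height-function formulation \eqref{eqn:height} by Hur. With $F > 1$ supplied by Theorem~\ref{thm:lower}, Theorem~3.1 of \cite{hur:symmetry} gives, after a horizontal translation, evenness of $h$ in $q$ together with the strict monotonicity $h_q < 0$ for $q > 0$ and $-m < p \le 0$. The corresponding statement for $\eta$ then follows immediately from the identity $\eta(q) = h(q,0) - H(0)$ in \eqref{eqn:conversion}.

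For part (b), I would appeal to the exponential asymptotics of Proposition~4.6 in \cite{hur:symmetry}, which describes the leading-order decay of $w = h - H$ in terms of the principal eigenpair $(\mu_1,\varphi_1)$ of the Sturm-Liouville problem \eqref{eqn:sturm}. It is again the bound $F > 1$ that guarantees $\mu_1 > 0$ and allows $\varphi_1$ to be taken positive, so that the ansatz $r\varphi_1(p)e^{-\sqrt{\mu_1}\abs{q}}$ genuinely captures the slowest-decaying mode; the remainder decays at the improved rate $s_1$ with $\sqrt{\mu_1} < s_1 < \min(2\sqrt{\mu_1},\sqrt{\mu_2})$, the upper bound reflecting both the next linear mode $\mu_2$ and the first quadratic self-interaction $2\sqrt{\mu_1}$.

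Because the substantive analysis already lives in \cite{hur:symmetry} and requires only $F > 1$ as input, I do not expect a genuine analytic obstacle. The one place demanding care is bookkeeping: confirming that the function-space setting and normalizations of $\psi$, $h$, and $H$ used in \cite{hur:symmetry} match those fixed in Section~\ref{sec:prelim}, and that \eqref{eqn:regmore}--\eqref{eqn:asymmore} are compatible with the decay hypotheses of the cited propositions. The essential new ingredient is simply the lower bound $F > 1$ from Theorem~\ref{thm:lower}, after which both conclusions follow by citation.
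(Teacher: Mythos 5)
Your proposal is correct and follows exactly the paper's own argument: use Theorem~\ref{thm:lower} to obtain $F>1$, then cite Theorem~3.1 of \cite{hur:symmetry} for part (a) and Proposition~4.6 of the same paper for part (b). The extra care you flag about matching normalizations and regularity hypotheses is sensible bookkeeping but introduces nothing beyond what the paper's proof already does implicitly.
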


\section{Existence of large-amplitude waves} \label{sec:large} 

In this section we observe the implications of
Theorems~\ref{thm:lower} and \ref{thm:upper} for the existence theory
of large-amplitude solitary waves with vorticity developed in
\cite{paper}. This theory involves a one parameter family of shear
flows 
\begin{align}
  \label{eqn:family}
  U(y) =  c - FU\s(y),
\end{align}
where $U\s$ is an arbitrary but fixed strictly positive function satisfying the
normalization condition
\begin{align}
  \label{eqn:normalized}
  g \int_{-d}^0 \frac{\dd y}{U\s(y)^2} = 1.
\end{align}
The normalization \eqref{eqn:normalized} ensures that the parameter
$F$ in \eqref{eqn:family} is indeed the Froude number $F$ defined in
\eqref{eqn:froude}. In the corollary below we call a solitary wave
\emph{symmetric} if $u$ and $\eta$ are even in $x$, and $v$ is odd in
$x$, and \emph{monotone} if in addition $\eta(x)$ is strictly
decreasing for $x > 0$.  
\begin{corollary}[Existence of large-amplitude waves]\label{cor:solitary}
  Fix $g,c,d > 0$, a H\"older parameter $0 < \alpha \le 1/2$, and a
  strictly positive function $U\s \in C^{2+\alpha}[-d,0]$ satisfying
  the normalization condition \eqref{eqn:normalized}. Then there
  exists a connected set $\cm$ of solitary waves 
  \begin{align*}
    (u,v,\eta,F) \in C^{1+\alpha} \by C^{1+\alpha} \by C^{2+\alpha}(\R) \by
    (1,\infty),
  \end{align*}
  where $F$ determines the asymptotic shear flow $U$ via
  \eqref{eqn:family}, with the following properties. Each wave in
  $\cm$ is a symmetric and monotone wave of elevation with $\sup u <
  c$ and $F > 1$. Moreover, at least one of following two conditions
  holds:
  \nat{
  \begin{enumerate}
  \item[\rm (i)] \label{s:part:stag} (Stagnation)
    There is a sequence of flows $(u_n,v_n,\eta_n,F_n) \in \cm$ and
    sequence of points $(x_n,y_n)$ such that $u_n(x_n,y_n) \nearrow c$; or
  \item[\rm (ii)] \label{s:part:height} (Large Froude number)
    There exists a sequence of flows $(u_n,v_n,\eta_n,F_n) \in \cm$
    with $F_n \nearrow \infty$.
  \end{enumerate}
  }{
  \begin{enumerate}[label=\textup{(\roman*)}]
  \item \label{s:part:stag} (Stagnation)
    There is a sequence of flows $(u_n,v_n,\eta_n,F_n) \in \cm$ and
    sequence of points $(x_n,y_n)$ such that $u_n(x_n,y_n) \nearrow c$; or
  \item \label{s:part:height} (Large Froude number)
    There exists a sequence of flows $(u_n,v_n,\eta_n,F_n) \in \cm$
    with $F_n \nearrow \infty$.
  \end{enumerate}
  }
  \begin{proof}
    By Theorem~1.1 in \cite{paper}, it is enough to show that no
    solitary wave $(u,v,\eta,F)$ in the closure of $\cm$
    can have the critical Froude number $F=1$. Let $(u,v,\eta,F)$ be a
    wave in the closure of $\cm$. By Proposition~2.4 in \cite{paper},
    $\sup u < c$, so Theorem~\ref{thm:lower} implies that $F \ne 1$.
  \end{proof}
\end{corollary}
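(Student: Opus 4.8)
The plan is to obtain the corollary as a short consequence of the global continuation theorem of \cite{paper} (its Theorem~1.1), reserving for ourselves only the task of excluding one degenerate alternative. That theorem is what actually produces the connected set $\cm$ together with all of the asserted structure---the stated regularity, and the fact that every wave along $\cm$ is a symmetric, monotone wave of elevation with $\sup u < c$ and $F > 1$. Crucially, it also comes with a blow-up alternative: as one moves along $\cm$, either the flow approaches stagnation in the sense of~(i), or the Froude number blows up as in~(ii), or the continuum accumulates at a wave sitting at the critical value $F = 1$. The content of the corollary is precisely that this third, degenerate possibility never occurs, so that the alternative collapses to~(i) or~(ii).

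Accordingly, the heart of the argument is to show that no wave in the closure $\overline{\cm}$ can have $F = 1$. I would first note that, because the parameter $F$ in the family \eqref{eqn:family} was normalized via \eqref{eqn:normalized} exactly so as to coincide with the Froude number of \eqref{eqn:froude}, the quantity $F$ governing the alternative is literally the $F$ appearing in Theorem~\ref{thm:lower}. Given a limiting wave $(u,v,\eta,F) \in \overline{\cm}$ associated with the degenerate alternative, it is a nontrivial wave of elevation, so $\eta \not\equiv 0$ and the hypotheses of Theorem~\ref{thm:lower} are in place except for the condition $\sup u < c$. To secure that remaining hypothesis I would invoke Proposition~2.4 of \cite{paper}, which provides the uniform separation from stagnation needed to conclude $\sup u < c$ for closure points that are not themselves stagnation limits. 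With $\sup u < c$ established, Theorem~\ref{thm:lower} applies and gives $F \neq 1$, contradicting $F = 1$ and thereby ruling out the degenerate alternative.

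I expect the main obstacle to lie entirely in the second step: confirming that the strict inequality $\sup u < c$ survives in the limit, so that Theorem~\ref{thm:lower} is genuinely applicable to the closure point. A priori the horizontal velocity could degenerate to the wave speed precisely in the limit, but such degeneration is exactly alternative~(i) and so is already permitted; the role of Proposition~2.4 of \cite{paper} is to guarantee that away from this stagnation scenario the bound $\sup u < c$ is retained. Everything else---the existence of $\cm$, its regularity, and the symmetry, monotonicity, and elevation of its members---is packaged in the existence theory of \cite{paper} and needs only to be quoted.
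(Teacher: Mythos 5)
Your proposal matches the paper's proof essentially verbatim: quote Theorem~1.1 of \cite{paper} for the continuum and its alternatives, then rule out the critical alternative by showing no wave in $\overline{\cm}$ has $F=1$, using Proposition~2.4 of \cite{paper} to get $\sup u < c$ and then Theorem~\ref{thm:lower} to conclude $F \ne 1$. The only (harmless) difference is your hedge that Proposition~2.4 applies away from stagnation limits, whereas the paper invokes it unconditionally for every wave in the closure of $\cm$.
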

We note that, even when \nat{(i)}{\ref{s:part:stag}} occurs in
Corollary~\ref{cor:solitary}, the shear flow $U$ is bounded away from
$c$ uniformly along the continuum $\cm$. Indeed, every wave in $\cm$
has $F > 1$ and hence
\begin{align*}
  \min (c-U) = F \min U\s > \min U\s > 0.
\end{align*}

In many cases, we can apply Theorem~\ref{thm:upper} to further
simplify Corollary~\ref{cor:solitary}.
\begin{corollary}\label{cor:solitary:extra}
  In the setting of Corollary~\ref{cor:solitary}, suppose that the
  fixed profile $U\s$ satisfies
  \begin{align*}
    \Lambda\s
    := \max_y \frac{U\s(0)}{U\s(y)} < \frac 2{\sqrt 3}.
  \end{align*}
  Then condition \nat{{\rm(ii)}}{\ref{s:part:height}} cannot occur, so that
  \nat{{\rm(i)}}{\ref{s:part:stag}} must hold.
\end{corollary}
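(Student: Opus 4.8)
The plan is to upgrade the finite upper bound of Theorem~\ref{thm:upper} into a bound that is \emph{uniform} along the continuum $\cm$, and then use this to rule out alternative \nat{{\rm(ii)}}{\ref{s:part:height}} directly.

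First I would record that for the special family of shear flows \eqref{eqn:family} the parameter $\Lambda$ does not depend on $F$, and in fact coincides with $\Lambda\s$. Since $U(y) = c - FU\s(y)$ gives $c - U(y) = FU\s(y)$, the common factor $F$ cancels in the defining ratio:
\begin{align*}
  \Lambda
  = \max_y \frac{c-U(0)}{c-U(y)}
  = \max_y \frac{FU\s(0)}{FU\s(y)}
  = \max_y \frac{U\s(0)}{U\s(y)}
  = \Lambda\s.
\end{align*}
Thus every wave in $\cm$ shares the same value $\Lambda = \Lambda\s < 2/\sqrt 3$, no matter how large its Froude number.

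Next I would verify that Theorem~\ref{thm:upper} applies to each wave in $\cm$. By Corollary~\ref{cor:solitary} every such wave is a wave of elevation with the regularity \eqref{eqn:reg}, $\sup u < c$, and $F > 1$; combined with $\Lambda = \Lambda\s < 2/\sqrt 3$ from the previous step, these are precisely the hypotheses of Theorem~\ref{thm:upper}. Applying it to an arbitrary wave in $\cm$ yields
\begin{align*}
  F < \big(1 - \tfrac 34 (\Lambda\s)^2\big)^{-1/2}.
\end{align*}
The right-hand side is a finite constant, the same for every wave in $\cm$, so $F$ is bounded above uniformly along the continuum.

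This uniform bound is exactly what contradicts alternative \nat{{\rm(ii)}}{\ref{s:part:height}}, which would require a sequence of waves in $\cm$ with $F_n \to \infty$. Hence \nat{{\rm(ii)}}{\ref{s:part:height}} cannot occur, and by the dichotomy established in Corollary~\ref{cor:solitary} alternative \nat{{\rm(i)}}{\ref{s:part:stag}} must hold. I do not anticipate any genuine obstacle: the analytic content is entirely packaged in Theorems~\ref{thm:lower}--\ref{thm:upper} and in the existence result Corollary~\ref{cor:solitary}, and the only point requiring care is the constancy of $\Lambda$ along $\cm$, which is what turns the amplitude-dependent bound of Theorem~\ref{thm:upper} into a single bound valid for the whole family.
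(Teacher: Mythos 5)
Your proposal is correct and follows essentially the same route as the paper: observe that the factor $F$ cancels in the ratio defining $\Lambda$ so that $\Lambda = \Lambda\s$ is constant along $\cm$, then apply Theorem~\ref{thm:upper} to get the uniform bound $F < \big(1 - \tfrac 34 (\Lambda\s)^2\big)^{-1/2}$, which contradicts alternative \nat{{\rm(ii)}}{\ref{s:part:height}}. The only difference is that you spell out the verification of the hypotheses of Theorem~\ref{thm:upper}, which the paper leaves implicit.
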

\begin{proof}
  Thanks to \eqref{eqn:family}, any wave in $\cm$ satisfies
  \begin{align*}
    \Lambda = \max_y \frac{c-U(0)}{c-U(y)}
    =
    \max_y \frac{U\s(0)}{U\s(y)}
    = \Lambda\s.
  \end{align*}
  Thus by Theorem~\ref{thm:upper} all waves in $\cm$ have $F < (1 -
  \frac 34(\Lambda\s)^2)^{-1/2} < \infty$.
\end{proof}
The conclusion of Corollary~\ref{cor:solitary:extra}, that there
exists a sequence of solutions along the continuum with $\sup u_n$
approaching $c$, is the same conclusion that was proved for periodic
waves \nat{in \citet{cs:exact}}{by Constantin and Strauss in
\cite{cs:exact}}. It remains an open question if the same is true for
solitary waves with $\Lambda\s \ge 2/\sqrt 3$, though it seems
doubtful that the restriction $\Lambda\s < 2/\sqrt 3$ is sharp.

\section{The case of constant vorticity} \label{sec:const} 

In this section we specialize the above results to asymptotic shear
flows $U$ which are linear in $y$, or equivalently to waves whose
vorticity
\begin{align*}
  \omega(x,y) = \gamma(p) \equiv -U_y 
\end{align*}
is constant. In this case more explicit formulas are available;
to make these formulas appear simpler, we define the
dimensionless constants
\begin{align*}
  \lambda\s = \frac{\lambda}{gd} = \frac{(c-U(0))^2}{gd} > 0,
  \qquad 
  \gamma\s = \frac{\gamma d}{\sqrt\lambda} = \frac{-U_yd}{(c-U(0))}.
\end{align*}
In terms of $\lambda\s$ and $\gamma\s$, the asymptotic shear flow $U$
is given by
\begin{align}
  \label{eqn:uconst}
  c-U(y) = \sqrt{\lambda}+\gamma y
  = \sqrt{gd\lambda\s}\left(1 + \gamma\s \frac yd\right).
\end{align}
Plugging $y=-d$ into \eqref{eqn:uconst}, we see that $\sup U < c$
implies $\gamma\s < 1$. Substituting 
\eqref{eqn:uconst} into the definitions \eqref{eqn:froude} of $F$ and
\eqref{eqn:Lambda} of $\Lambda$, we easily check that 
\begin{align}
  \label{eqn:flconst}
  F^2
  = \lambda\s (1-\gamma\s),
  \qquad 
  \Lambda 
  = \frac 1{1-\max(\gamma\s,0)}.
\end{align}
Using \eqref{eqn:flconst} in Theorems~\ref{thm:lower} and
\ref{thm:upper}, we can then easily prove the following.
\begin{corollary}[Constant vorticity]\label{cor:constant}
  Consider a solitary wave of elevation with constant vorticity
  $\gamma$, regularity \eqref{eqn:reg}, and $\sup u < c$ so that in
  particular $\gamma\s < 1$. Then we have the
  following bounds on $\lambda\s$ and $\gamma\s$, 
  \begin{alignat}{2}
    \label{eqn:const:lower}
    \lambda\s (1-\gamma\s) &> 1,\\
    \label{eqn:const:upperneg}
    \lambda\s (1-\gamma\s) &< 4 &\quad&\fora \gamma\s \le 0,\\
    \label{eqn:const:upperpos}
    \lambda\s \frac{1-8\gamma\s+4(\gamma\s)^2}{1-\gamma\s} &< 4 &&\fora 0<\gamma\s< 1-\sqrt{3}/2,
  \end{alignat}
  and hence the bounds
  \begin{align}
    \label{eqn:const:amp}
    \frac{\max \eta}d < \frac 2{1-\gamma\s} \quad\fora \gamma\s \le 0,
    \qquad 
    \frac{\max \eta}d < \frac{2(1-\gamma\s)}{1-8\gamma\s+4(\gamma\s)^2}
    \quad\fora 0<\gamma\s< 1-\sqrt{3}/2.
  \end{align}
  on the amplitude.
  \begin{proof}
    The first inequality \eqref{eqn:const:lower} is $F>1$ from
    Theorem~\ref{thm:lower}, while \eqref{eqn:const:upperneg} and
    \eqref{eqn:const:upperpos} are \eqref{eqn:upper} from
    Theorem~\ref{thm:upper}. Since Bernoulli's law
    \eqref{eqn:Bernoulli} implies $\max \eta < d\lambda\s/2$, the
    bounds \eqref{eqn:const:amp} on the amplitude follow immediately
    from \eqref{eqn:const:upperneg} and \eqref{eqn:const:upperpos}.
  \end{proof}
\end{corollary}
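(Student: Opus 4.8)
The plan is to reduce all three inequalities, together with the amplitude bounds, to the general results of Theorems~\ref{thm:lower} and \ref{thm:upper} by means of the explicit constant-vorticity formulas \eqref{eqn:flconst} for $F^2$ and $\Lambda$. Since those formulas have already been established, what remains is entirely elementary: substitute them into the two theorems and simplify.

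First I would dispose of the lower bound \eqref{eqn:const:lower}. By \eqref{eqn:flconst} we have $F^2 = \lambda\s(1-\gamma\s)$, so the conclusion $F > 1$ of Theorem~\ref{thm:lower} is literally $\lambda\s(1-\gamma\s) > 1$, with no computation needed.

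For the upper bounds I would split on the sign of $\gamma\s$, following the two branches of the formula $\Lambda = 1/(1-\max(\gamma\s,0))$ in \eqref{eqn:flconst}. When $\gamma\s \le 0$ we have $\Lambda = 1 < 2/\sqrt3$, so Theorem~\ref{thm:upper} gives $F^2 < (1-\tfrac34)^{-1} = 4$, which is \eqref{eqn:const:upperneg}. When $\gamma\s > 0$ we have $\Lambda = 1/(1-\gamma\s)$, and I would first verify that the hypothesis $\Lambda < 2/\sqrt3$ of Theorem~\ref{thm:upper} is equivalent to $\gamma\s < 1 - \sqrt3/2$, the exact range appearing in \eqref{eqn:const:upperpos}. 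The bound \eqref{eqn:upper} then reads $\lambda\s(1-\gamma\s) < \big(1 - \tfrac{3}{4(1-\gamma\s)^2}\big)^{-1}$, and clearing the denominator turns this into \eqref{eqn:const:upperpos}; the only step requiring any thought is recognizing the algebraic identity $4(1-\gamma\s)^2 - 3 = 1 - 8\gamma\s + 4(\gamma\s)^2$, which puts the inequality in the stated form.

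Finally, for the amplitude bounds \eqref{eqn:const:amp}, I would combine the upper bounds on $\lambda\s$ just obtained with Bernoulli's law. Evaluating \eqref{eqn:Bernoulli} on the free surface yields $\max\eta < d\lambda\s/2$ (the first inequality of \eqref{eqn:ugly}); solving \eqref{eqn:const:upperneg} and \eqref{eqn:const:upperpos} for $\lambda\s$ and substituting gives the two claimed estimates. The main obstacle, insofar as there is one, is purely bookkeeping: tracking the case distinction on the sign of $\gamma\s$ consistently through both the Froude-number bounds and the amplitude bounds, and carrying out the single algebraic simplification in the $\gamma\s > 0$ case so that the threshold $1 - \sqrt3/2$ and the numerator $1 - 8\gamma\s + 4(\gamma\s)^2$ emerge cleanly.
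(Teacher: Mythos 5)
Your proposal is correct and follows essentially the same route as the paper: substitute the constant-vorticity formulas \eqref{eqn:flconst} into Theorems~\ref{thm:lower} and \ref{thm:upper}, then combine with the Bernoulli bound $\max\eta < d\lambda\s/2$ for the amplitude estimates. The only difference is that you spell out the algebra (the equivalence $\Lambda < 2/\sqrt3 \Leftrightarrow \gamma\s < 1-\sqrt3/2$ and the identity $4(1-\gamma\s)^2 - 3 = 1 - 8\gamma\s + 4(\gamma\s)^2$) that the paper leaves implicit in the sentence preceding the corollary.
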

From \eqref{eqn:const:upperneg} and \eqref{eqn:const:upperpos} we see
that, for any fixed $\gamma\s < 1-\sqrt 3/2 \approx 0.134$, waves of
elevation with $\sup u < c$ have $\lambda\s$ bounded above by a
constant depending only on $\gamma\s$. It is interesting to compare
this to the numerical results in \cite{vandenb:constant}, which
suggest that for any fixed $\gamma\s > \gamma\s_{\textup{cr}} \approx
0.33$ there exist overhanging waves with $\lambda\s$ arbitrarily
large. Note however that overhanging waves necessarily violate our assumption
$\sup u < c$.


\section{Surface tension} \label{sec:tension} 

In this section we prove that Theorem~\ref{thm:lower} continues to
hold in the presence of surface tension. For waves with surface
tension, the dynamic boundary condition \eqref{eqn:ww:dynamic} is
replaced by
\begin{align}
  \label{eqn:ww:dynamic:ten}
  P + \sigma \bigg( \frac{\eta_x}{\sqrt{1+\eta_x^2}} \bigg)_x = 0
  \quad \ona y = \eta(x),
\end{align}
where the constant $\sigma$ is the coefficient of surface tension. In
the following we permit $\sigma$ to be positive or negative. The
corresponding boundary condition \eqref{eqn:height:2} in the height
equation becomes
\begin{align}
  \label{eqn:height:2:ten}
  \frac{1+h_q^2}{2h_p^2} - \frac 1{2H_p^2} + g(h-H) 
  - \sigma \bigg( \frac{h_q}{(1+h_q^2)^{1/2}} \bigg)_q &= 0
  \quad \ona p=0.
\end{align}
For irrotational solitary waves with surface tension,
\nat{\citet{ak:tension} prove}{Amick and
Kirchgässner proved} the analogue
\begin{align}
  \label{eqn:toogood:ten}
  F^2 = 1 + \frac 3{2d} \frac{\int \eta^2 \, \dd x}{\int \eta\, \dd x}
  + \frac\sigma{gd} 
  \frac{\int (\sqrt{1+\eta_x^2}-1)\, \dd x}{\int \eta\, \dd x}
\end{align}
of the integral identity \eqref{eqn:toogood}\nat{}{ in
\cite{ak:tension}}. For irrotational waves of depression, such as
those constructed in \cite{ak:tension}, \eqref{eqn:toogood:ten}
immediately implies that the Froude number $F < 1$. As a consequence
of Theorem~\ref{thm:lower:ten} below, the same bound $F< 1$ holds for
waves with vorticity. 

We will work with solutions which have the regularity
\begin{align}
  \label{eqn:reg:ten}
  \eta = h(\placeholder,0) \in W^{2,r}_\loc(\R),
  \qquad 
  h \in W^{2,r}_\loc(\Obar) \sub C^{1+\alpha}_\loc(\Obar),
  \qquad 
  H \in W^{2,r}[-m,0],
\end{align}
where, as in Section~\ref{sec:intro:results}, $0 < \alpha < 1$ and $r
= 2/(1-\alpha)$; see \cite{mm:unbounded} for the equivalence of
various formulations for periodic waves with surface tension.

\begin{theorem}\label{thm:lower:ten}
  Theorem~\ref{thm:lower} holds for waves with
  surface tension, provided we replace the regularity \eqref{eqn:reg}
  with \eqref{eqn:reg:ten}.
  \begin{proof}
    We argue exactly as in the proof of Theorem~\ref{thm:lower}, with
    Lemma~\ref{lem:lower} replaced by Lemma~\ref{lem:lower:ten} below.
  \end{proof}
\end{theorem}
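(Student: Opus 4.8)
The plan is to reduce Theorem~\ref{thm:lower:ten} to a surface-tension analogue of Lemma~\ref{lem:lower}, which I will call Lemma~\ref{lem:lower:ten}. Concretely, I would prove that the \emph{identical} limit \eqref{eqn:lower} continues to hold for waves governed by the modified boundary condition \eqref{eqn:height:2:ten}, now under the strengthened regularity \eqref{eqn:reg:ten}. Once this lemma is established, the proof of Theorem~\ref{thm:lower} applies verbatim: for a nontrivial wave the second integrand of \eqref{eqn:lower} is again a nondecreasing, eventually strictly positive function of $M$, which forces the sign condition \eqref{eqn:happy} and hence $F > 1$ for a wave of elevation and $F < 1$ for a wave of depression. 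Nothing in that final deduction references surface tension, so the reduction is clean.

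The heart of the matter is therefore Lemma~\ref{lem:lower:ten}, which I would prove by retracing the proof of Lemma~\ref{lem:lower} line by line. Multiplying the interior equation \eqref{eqn:height:1} by $\Phi(p) = \int_{-m}^p H_p^3(s)\,\dd s$ and integrating by parts produces exactly the three terms of \eqref{eqn:dropme}, with $g\Phi(0) = 1/F^2$ as before; the third term still vanishes as $M \to \infty$ by the decay $h_q \to 0$ in \eqref{eqn:height:asym}. The interior equation is untouched by surface tension, so the only place the new physics enters is the boundary term at $p=0$, where I must invoke \eqref{eqn:height:2:ten} in place of \eqref{eqn:height:2}. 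The subsequent algebraic rewriting \eqref{eqn:magic} and the recombination of interior and boundary contributions in \eqref{eqn:postmagic} then carry over unchanged.

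The decisive observation, and the step I expect to require the most care, is that the extra surface tension contribution is harmless. Substituting \eqref{eqn:height:2:ten} into the second term of \eqref{eqn:dropme} reproduces the terms already present in the capillary-free case together with the single additional contribution
\begin{align*}
  -\frac{\sigma}{gF^2}\int_{-M}^M \bigg(\frac{h_q}{(1+h_q^2)^{1/2}}\bigg)_q(q,0)\,\dd q
  = -\frac{\sigma}{gF^2}\,\frac{h_q}{(1+h_q^2)^{1/2}}\bigg|^{x=M}_{x=-M}.
\end{align*}
Because this is a perfect $q$-derivative, it collapses to a boundary contribution that vanishes as $M \to \infty$ by $h_q \to 0$; consequently the surface tension term leaves no trace in the limit and the resulting identity is \emph{precisely} \eqref{eqn:lower}. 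Note that this cancellation is insensitive to the sign of $\sigma$, so—unlike the irrotational argument based on \eqref{eqn:toogood:ten}—no sign restriction on the capillary coefficient is needed. The main technical point to verify is that the strengthened regularity \eqref{eqn:reg:ten}, in particular $h(\placeholder,0) \in W^{2,r}_\loc(\R)$, makes $q \mapsto h_q(q,0)/(1+h_q(q,0)^2)^{1/2}$ absolutely continuous and thereby justifies both differentiating the surface tension term and applying the fundamental theorem of calculus above; once that regularity is granted, the cancellation is immediate and the remainder of the argument is identical to Lemma~\ref{lem:lower}.
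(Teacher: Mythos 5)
Your proposal is correct and follows essentially the same route as the paper: you multiply \eqref{eqn:height:1} by the same test function $\Phi$, isolate the single new capillary contribution from \eqref{eqn:height:2:ten}, and observe that it is a perfect $q$-derivative whose boundary values vanish as $M \to \infty$ by \eqref{eqn:height:asym}, so the limit \eqref{eqn:lower} is unchanged and the deduction of $F>1$ (resp.\ $F<1$) proceeds verbatim. Your added remarks---that the argument is insensitive to the sign of $\sigma$ and that the regularity \eqref{eqn:reg:ten} is what licenses the fundamental theorem of calculus on the surface term---are exactly the points the paper's proof relies on.
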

\begin{lemma} \label{lem:lower:ten}
  Lemma~\ref{lem:lower} holds for waves with surface tension, provided
  we replace the regularity \eqref{eqn:reg} with \eqref{eqn:reg:ten}.
  \begin{proof}
    We will follow the proof of Lemma~\ref{lem:lower} and notice that
    the term involving surface tension drops out of the calculation
    entirely. Multiplying \eqref{eqn:height:1} by $\Phi(p) =
    \int_{-m}^p H_p^3(s)\,\dd s$ and integrating by parts, we obtain
    \eqref{eqn:dropme} as before,
    \begin{align}
      \label{eqn:dropme:ten}
      0 &= 
      \int_{-M}^M \int_{-m}^0 
      \Big( \frac{1+h_q^2}{2h_p^2} - \frac 1{2H_p^2} \Big) \Phi_p
      \, \dd p\, \dd q
      +
      \frac 1{gF^2}\int_{-M}^M
      \Big( - \frac{1+h_q^2}{2h_p^2} + \frac 1{2H_p^2} \Big)(q,0) \,
      \dd q\\
      \notag
      &\qquad +
      \int_{-m}^0 \frac{h_q}{h_p} \Phi\, \dd p \bigg|^{x=M}_{x=-M}.
    \end{align}
    We claim that \eqref{eqn:dropme:ten} implies \eqref{eqn:premagic}.
    Indeed, the first and last terms in \eqref{eqn:dropme:ten} can be
    treated as in the proof of Lemma~\ref{lem:lower}, so \eqref{eqn:premagic} follows from
    the following computation involving the middle term,
    \begin{align*}
      \frac 1{gF^2}\int_{-M}^M
      &\Big( - \frac{1+h_q^2}{2h_p^2} + \frac 1{2H_p^2} \Big)(q,0) \,
      \dd q
      -\frac 1{F^2} \int_{-M}^M (h(q,0)-H(0))\, \dd q\\
      &\qquad = - \frac{\sigma}{gF^2} \int_{-M}^M 
        \bigg( \frac{h_q}{(1+h_q^2)^{1/2}} \bigg)_q(q,0)\, \dd q\\
        &\qquad = -\frac \sigma{gF^2} \frac{h_q}{(1+h_q^2)^{1/2}}\bigg|^{(M,0)}_{(-M,0)}
      \to 0 \asa M \to \infty,
    \end{align*}
    where we have used the boundary condition \eqref{eqn:height:2:ten}
    and the asymptotic conditions \eqref{eqn:height:asym}. With
    \eqref{eqn:premagic} established, we can then complete the
    argument exactly as in the proof of Lemma~\ref{lem:lower}.
  \end{proof}
\end{lemma}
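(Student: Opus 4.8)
The plan is to mimic the proof of Lemma~\ref{lem:lower} almost verbatim, using the same test function $\Phi(p) = \int_{-m}^p H_p^3(s)\,\dd s$, and to verify that the only place where the dynamic boundary condition enters — namely the boundary term at $p=0$ — is unaffected by the extra surface-tension contribution in the limit $M \to \infty$. Since the interior equation \eqref{eqn:height:1} and the kinematic boundary condition \eqref{eqn:height:3} are identical in the surface-tension problem, multiplying \eqref{eqn:height:1} by $\Phi$ and integrating by parts produces exactly the expression \eqref{eqn:dropme} as in the irrotational-tension-free case. First I would reproduce this computation, noting that the regularity \eqref{eqn:reg:ten} (in particular $\eta \in W^{2,r}_\loc$, which is needed to make sense of the curvature term and integrate it by parts along the free surface) is exactly what makes every step legitimate.

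The crux of the argument is the treatment of the middle boundary term in \eqref{eqn:dropme}. In the tension-free case one uses \eqref{eqn:height:2} to replace $-\tfrac{1+h_q^2}{2h_p^2} + \tfrac 1{2H_p^2}$ on $p=0$ by $g(h-H)$, which then combines with the Froude factor $1/(gF^2)$ to yield the clean surface integral $\tfrac 1{F^2}\int_{-M}^M (h(q,0)-H(0))\,\dd q$ in \eqref{eqn:premagic}. With surface tension, the boundary condition is instead \eqref{eqn:height:2:ten}, so the same substitution produces an additional term $\sigma\bigl(h_q/(1+h_q^2)^{1/2}\bigr)_q$ integrated in $q$ over $[-M,M]$. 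The key observation is that this is a perfect $q$-derivative: integrating it along the free surface gives the boundary contribution $\sigma\, h_q/(1+h_q^2)^{1/2}$ evaluated at $x=\pm M$, and by the asymptotic condition $h_q \to 0$ from \eqref{eqn:height:asym} this vanishes as $M\to\infty$. Hence \eqref{eqn:premagic} holds unchanged, and the surface-tension term has dropped out entirely.

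From \eqref{eqn:premagic} onward the proof is literally identical to that of Lemma~\ref{lem:lower}: I would invoke the algebraic identity \eqref{eqn:magic} to rewrite the interior integrand, obtain \eqref{eqn:postmagic}, substitute into \eqref{eqn:premagic}, and use $h(q,0)-H(0)=\eta(q)$ to arrive at the conclusion \eqref{eqn:lower}. Since nothing in this latter half of the argument ever touched the dynamic boundary condition, no modification is required.

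I do not expect any serious obstacle here; the lemma is essentially a bookkeeping exercise showing that a curvature term, being in divergence form in $q$, contributes only a vanishing boundary term. The one point demanding a little care is justifying the integration by parts of the curvature term along the free surface and the convergence of the boundary contribution — this is precisely where the strengthened regularity $\eta = h(\placeholder,0)\in W^{2,r}_\loc(\R)$ in \eqref{eqn:reg:ten}, as opposed to the weaker $\eta\in C^{1+\alpha}$ of \eqref{eqn:reg}, is used, since it guarantees that $\bigl(h_q/(1+h_q^2)^{1/2}\bigr)_q$ lies in $L^r_\loc$ and that the pointwise boundary term $h_q/(1+h_q^2)^{1/2}$ is well defined and tends to zero.
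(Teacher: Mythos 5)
Your proposal is correct and follows essentially the same route as the paper's proof: the same test function $\Phi$, the same derivation of \eqref{eqn:dropme}, and the same key observation that the surface-tension contribution from \eqref{eqn:height:2:ten} is a perfect $q$-derivative whose boundary values vanish as $M\to\infty$ by \eqref{eqn:height:asym}, so that \eqref{eqn:premagic} and the remainder of the argument go through unchanged. Your remark on where the strengthened regularity \eqref{eqn:reg:ten} enters is also consistent with the paper's setup.
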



\begin{thebibliography}{Hur08b}

\bibitem[AFT82]{aft}
Charles~J. Amick, L.~E. Fraenkel, and J.~F. Toland.
\newblock On the {S}tokes conjecture for the wave of extreme form.
\newblock {\em Acta Math.}, 148:193--214, 1982.

\bibitem[AK89]{ak:tension}
Charles~J. Amick and Klaus Kirchg{\"a}ssner.
\newblock A theory of solitary water-waves in the presence of surface tension.
\newblock {\em Arch. Rational Mech. Anal.}, 105(1):1--49, 1989.

\bibitem[AT81]{at:finite}
Charles~J. Amick and J.~F. Toland.
\newblock On solitary water-waves of finite amplitude.
\newblock {\em Arch. Rational Mech. Anal.}, 76(1):9--95, 1981.

\bibitem[B{\'e}l28]{belanger:essai}
Jean Baptiste Charles~Joseph B{\'e}langer.
\newblock {\em Essai sur la solution num{\'e}rique de quelques probl{\`e}mes
  relatifs au mouvement permanent des eaux courantes}.
\newblock Carilian-Goeury, 1828.

\bibitem[Bur53]{burns}
J.~C. Burns.
\newblock Long waves in running water.
\newblock {\em Mathematical Proceedings of the Cambridge Philosophical
  Society}, 49(04):695--706, 1953.

\bibitem[Cha09]{chanson:belanger}
Hubert Chanson.
\newblock Development of the {B}{\'e}langer equation and backwater equation by
  {J}ean-{B}aptiste {B}{\'e}langer (1828).
\newblock {\em Journal of Hydraulic Engineering}, 135(3):159--163, 2009.

\bibitem[CS04]{cs:exact}
Adrian Constantin and Walter~A. Strauss.
\newblock Exact steady periodic water waves with vorticity.
\newblock {\em Comm. Pure Appl. Math.}, 57(4):481--527, 2004.

\bibitem[CS11]{cs:discont}
Adrian Constantin and Walter~A. Strauss.
\newblock Periodic traveling gravity water waves with discontinuous vorticity.
\newblock {\em Arch. Ration. Mech. Anal.}, 202(1):133--175, 2011.

\bibitem[Dar03]{darrigol:horse}
Olivier Darrigol.
\newblock The spirited horse, the engineer, and the mathematician: water waves
  in nineteenth-century hydrodynamics.
\newblock {\em Arch. Hist. Exact Sci.}, 58(1):21--95, 2003.

\bibitem[Fro74]{froude:greyhound}
William Froude.
\newblock {\em On experiments with HMS Greyhound}.
\newblock Institution of Naval Architects, 1874.

\bibitem[GW08]{gw}
Mark~D. Groves and Erik Wahl{\'e}n.
\newblock Small-amplitude {S}tokes and solitary gravity water waves with an
  arbitrary distribution of vorticity.
\newblock {\em Phys. D}, 237(10-12):1530--1538, 2008.

\bibitem[HL08]{hl:running}
Vera~Mikyoung Hur and Zhiwu Lin.
\newblock Unstable surface waves in running water.
\newblock {\em Comm. Math. Phys.}, 282(3):733--796, 2008.

\bibitem[Hur08a]{hur:exact}
Vera~Mikyoung Hur.
\newblock Exact solitary water waves with vorticity.
\newblock {\em Arch. Ration. Mech. Anal.}, 188(2):213--244, 2008.

\bibitem[Hur08b]{hur:symmetry}
Vera~Mikyoung Hur.
\newblock Symmetry of solitary water waves with vorticity.
\newblock {\em Math. Res. Lett.}, 15(3):491--509, 2008.

\bibitem[Hur11]{hur:analyticity}
Vera~Mikyoung Hur.
\newblock Analyticity of rotational flows beneath solitary water waves.
\newblock {\em International Mathematics Research Notices}, 2011.

\bibitem[HVB83]{hvb:steep}
J.~K. Hunter and Jean-Marc Vanden-Broeck.
\newblock Accurate computations for steep solitary waves.
\newblock {\em Journal of fluid Mechanics}, 136:63--71, 1983.

\bibitem[KK12]{kk:bounds}
Vladimir Kozlov and Nikolay Kuznetsov.
\newblock Bounds for steady water waves with vorticity.
\newblock {\em J. Differential Equations}, 252(1):663--691, 2012.

\bibitem[KN82]{kn:comp}
G.~Keady and J.~Norbury.
\newblock Domain comparison theorems for flows with vorticity.
\newblock {\em Quart. J. Mech. Appl. Math.}, 35(1):17--32, 1982.

\bibitem[KP74]{kp}
G.~Keady and W.~G. Pritchard.
\newblock Bounds for surface solitary waves.
\newblock {\em Proc. Cambridge Philos. Soc.}, 76:345--358, 1974.

\bibitem[LH74]{lh:solitary}
M.~S. Longuet-Higgins.
\newblock On the mass, momentum, energy and circulation of a solitary wave.
\newblock {\em Proc. Roy. Soc. (London) Ser. A}, 337:1--13, 1974.

\bibitem[LHF74]{lhf:mme}
M.~S. Longuet-Higgins and J.~D. Fenton.
\newblock On the mass, momentum, energy and circulation of a solitary wave.
  {II}.
\newblock {\em Proc. Roy. Soc. (London) Ser. A}, 340:471--493, 1974.

\bibitem[McL84]{mcleod}
J.~B. McLeod.
\newblock The {F}roude number for solitary waves.
\newblock {\em Proc. Roy. Soc. Edinburgh Sect. A}, 97:193--197, 1984.

\bibitem[Mil80]{miles:survey}
John~W. Miles.
\newblock Solitary waves.
\newblock In {\em Annual review of fluid mechanics, {V}ol. 12}, pages 11--43.
  Annual Reviews, Palo Alto, Calif., 1980.

\bibitem[MM12]{matioc2:regsym}
Anca-Voichita Matioc and Bogdan-Vasile Matioc.
\newblock Regularity and symmetry properties of rotational solitary water
  waves.
\newblock {\em J. Evol. Equ.}, 12(2):481--494, 2012.

\bibitem[MM14]{mm:unbounded}
Calin~Iulian Martin and Bogdan-Vasile Matioc.
\newblock Steady periodic water waves with unbounded vorticity: Equivalent
  formulations and existence results.
\newblock {\em Journal of Nonlinear Science}, pages 1--27, 2014.

\bibitem[Plo91]{plotnikov:turning}
P.~I. Plotnikov.
\newblock Nonuniqueness of solutions of a problem on solitary waves, and
  bifurcations of critical points of smooth functionals.
\newblock {\em Izv. Akad. Nauk SSSR Ser. Mat.}, 55(2):339--366, 1991.

\bibitem[Rus44]{russell}
J.~Scott Russell.
\newblock Report on waves.
\newblock In {\em 14th meeting of the British Association for the Advancement
  of Science}, volume 311, page 390, 1844.

\bibitem[Sta47]{starr}
Victor~P. Starr.
\newblock Momentum and energy integrals for gravity waves of finite height.
\newblock {\em J. Mar. Res.}, 6:175--193, 1947.

\bibitem[TK61]{ter:rot}
A.~M. Ter-Krikorov.
\newblock A solitary wave on the surface of a turbulent liquid.
\newblock {\em \v Z. Vy\v cisl. Mat. i Mat. Fiz.}, 1:1077--1088, 1961.

\bibitem[Tol96]{toland:survey}
J.~F. Toland.
\newblock Stokes waves.
\newblock {\em Topol. Methods Nonlinear Anal.}, 7(1):1--48, 1996.

\bibitem[VB94]{vandenb:constant}
Jean-Marc Vanden-Broeck.
\newblock Steep solitary waves in water of finite depth with constant
  vorticity.
\newblock {\em J. Fluid Mech.}, 274:339--348, 1994.

\bibitem[Whe13]{paper}
Miles~H. Wheeler.
\newblock Large-amplitude solitary water waves with vorticity.
\newblock {\em SIAM J. Math. Anal.}, 45(5):2937--2994, 2013.

\end{thebibliography}
\def\cprime{$'$}

\end{document}